\DeclareMathAlphabet{\mathpzc}{OT1}{pzc}{m}{it}
\begin{document}

\baselineskip=17pt

\pagestyle{headings}

\numberwithin{equation}{section}

\makeatletter                                                           %LMS

% 1. Smaller section titles                                             %LMS
\def\section{\@startsection {section}{1}{\z@}{-5.5ex plus -.5ex         %LMS
minus -.2ex}{1ex plus .2ex}{\large \bf}}                                 %LMS

% 2. Period after a single section number                               %LMS

\pagestyle{fancy}
% i comandi seguenti impediscono la scrittura in maiuscolo
% dei nomi dei capitoli e dei paragrafi nelle intestazioni
\renewcommand{\sectionmark}[1]{\markboth{ #1}{ #1}}
\renewcommand{\subsectionmark}[1]{\markright{ #1}}
\fancyhf{} % rimuove l'attuale contenuto dell'intestazione
 %e del pi\`e di pagina
\fancyhead[LE,RO]{\slshape\thepage}
\fancyhead[LO]{\slshape\rightmark}
\fancyhead[RE]{\slshape\leftmark}

\addtolength{\headheight}{0.5pt} % riserva spazio per la linea
%\fancypagestyle{plain}{%
%\fancyhead{} % ignora, nello stile plain, le intestazioni
\renewcommand{\headrulewidth}{0pt} % e la linea
%}

%\newtheorem{thm}{Theorem}[section]
\newtheorem{thm}{Theorem}
\newtheorem{mainthm}[thm]{Main Theorem}
\newtheorem*{T}{Theorem 1'}

\newcommand{\ZZ}{{\mathbb Z}}
\newcommand{\GG}{{\mathbb G}}
\newcommand{\Z}{{\mathbb Z}}
\newcommand{\RR}{{\mathbb R}}
\newcommand{\NN}{{\mathbb N}}
\newcommand{\GF}{{\rm GF}}
\newcommand{\QQ}{{\mathbb Q}}
\newcommand{\CC}{{\mathbb C}}
\newcommand{\FF}{{\mathbb F}}

\newtheorem{lem}[thm]{Lemma}
\newtheorem{cor}[thm]{Corollary}
\newtheorem{pro}[thm]{Proposition}
\newtheorem*{proposi}{Proposition \ref{pro:pro63}}

\newtheorem{proprieta}[thm]{Property}
\newcommand{\pf}{\noindent \textbf{Proof.} \ }
\newcommand{\eop}{$_{\Box}$  \relax}
\newtheorem{num}{equation}{}

\theoremstyle{definition}
\newtheorem{rem}[thm]{Remark}
\newtheorem*{D}{Definition}

\newtheorem{Def}{Definition}

\newcommand{\nsplit}{\cdot}
\newcommand{\G}{{\mathfrak g}}
\newcommand{\GL}{{\rm GL}}
\newcommand{\SL}{{\rm SL}}
\newcommand{\SP}{{\rm Sp}}
\newcommand{\LL}{{\rm L}}
\newcommand{\Ker}{{\rm Ker}}
\newcommand{\la}{\langle}
\newcommand{\ra}{\rangle}
\newcommand{\PSp}{{\rm PSp}}
\newcommand{\U}{{\rm U}}
\newcommand{\GU}{{\rm GU}}
\newcommand{\Aut}{{\rm Aut}}
\newcommand{\Alt}{{\rm Alt}}
\newcommand{\Sym}{{\rm Sym}}

\newcommand{\isom}{{\cong}}
\newcommand{\z}{{\zeta}}
\newcommand{\Gal}{{\rm Gal}}

\newcommand{\F}{{\mathbb F}}
\renewcommand{\O}{{\cal O}}
\newcommand{\Q}{{\mathbb Q}}
\newcommand{\R}{{\mathbb R}}
\newcommand{\N}{{\mathbb N}}
\newcommand{\E}{{\mathcal{E}}}

%\newcommand{\begin{proof}}{{\smallskip\noindent{\bf Proof.}\quad}}
%\newcommand{\end{proof}}{\begin{flushright}$\square$\end{flushright}
%\vskip 0.2cm\goodbreak}

%\newpage
%\pagestyle{plain}

\vskip 0.5cm

\title{On local-global divisibility by $p^n$\\ in elliptic curves}
\author{Laura Paladino, Gabriele Ranieri, Evelina Viada\footnote{Supported by the Swiss National Science Foundation SNF}}
\date{  }
\maketitle

\vskip 1.5cm

\begin{abstract}
Let $p$ be a prime number and let $k$ be a number field, which does not contain the field $\Q ( \zeta_p + \overline{\zeta_p} )$. Let $\mathcal{E}$ be an elliptic curve defined over $k$. We prove that if there are no $k$-rational torsion points of exact order $p$ on $\E$, then the local-global  principle holds for divisibility by  $p^n$, with $n$ a natural number.
As a consequence of the deep theorem of Merel, for $p$ larger than a constant depending only on the degree of $k$, there are no counterexamples to the local-global divisibility principle. Nice and deep works give explicit small constants for elliptic curves defined over a number field of degree at most $5$ over $\QQ$.

\end{abstract}

\section{Introduction}
Let $k$ be a number field  and let ${\mathcal{A}}$ be a commutative algebraic group defined over $ k $.
Several papers have been written on  the following classical question, known as \emph{Local-Global Divisibility Problem}.

\par\bigskip\noindent  P{\small ROBLEM}: \emph{Let $P\in {\mathcal{A}}(k)$. Assume that for all but finitely many valuations $v\in k$, there exists $D_v\in {\mathcal{A}}(k_v)$ such that $P=qD_v$, where $q$ is a positive integer. Is it possible to conclude that there exists $D\in {\mathcal{A}}(k)$ such that $P=qD$?}

\par\bigskip\noindent  By  B\'{e}zout's identity, to get answers for a general integer it is sufficient to solve it for powers $p^n$ of a prime. In the classical case of ${\mathcal{A}}={\mathbb{G}}_m$, the answer is positive for $ p $ odd, and negative for instance for $q=8$ (and $P=16$) (see for example \cite{AT}, \cite{Tro}).

\bigskip  For general commutative algebraic groups,  R. Dvornicich and U. Zannier  gave some general cohomological criteria, sufficient to answer the question (see \cite{DZ} and \cite{DZ3}).
 Using these criteria, they found a number of examples an counterexamples to the local-global principle when $\mathcal{A}$ is an elliptic curve or a torus. Further examples in a torus are given by M. Illengo \cite{Ill}.
For an elliptic curve $\E$, the local-global principle holds for divisibility by any prime $ p $. Furthermore, they provide a geometric criterium: if $ \E $ does not admit any $k$-isogeny of degree $ p $, then the local-global principle holds for divisibility by  $p^n$.
Theorems of Serre and of Mazur (see \cite{Ser} and \cite{Maz}) prove that such an isogeny exists only for $p\le c(k,\E)$,  where $c ( k, \E )$ is a constant depending on $ k $ and $\E$, and on elliptic curves over $\Q$, for $p\in S_1=\{2,3,5,7,11,13,17,19,37,43,67,163\}$.
 Thus the local-global principle holds in general for $p^n$ with  $p>c(k,\E)$ and in elliptic curves over $\QQ$ it suffices $p\notin S_1$.\\

The present work answers to a question of Dvornicich and Zannier:  \emph{Can one make the constant depending only
on the field $k$ and not on $\mathcal{E}$? } In a first paper \cite{P-R-V}, we give positive answer for the very special case of divisibility by $p^2$. Here we essentially give a strong geometric criterium: if $\E$ does not admit any $k$-rational torsion point of exact order  $ p $, then the local-global principle holds for divisibility by $p^n$. In view of the  deep Merel  theorem we give a general positive answer to the above question. More precisely the constant depends only on the degree of $k$. In an unpublished work, Oesterl\'e \cite{Oes} showed that there is no $k$-torsion of exact order larger than  $( 3^{[k:\Q] /2} + 1 )^2$.
This constant is not sharp and the bound is expected to be polynomial in the degree. Sharp bounds are hard. They are known only for fields of small degree.The effective Mazur Theorem \cite{Maz2} for elliptic curves over $\QQ$ allows us to shrunk the set $S_1$ to $\widetilde{S}_1=\{2,3,5,7\}$. The results of Kamienny~\cite{Ke-Mo}, Kenku and Momose \cite{Ke-Mo2} and works of Parent \cite{Par} and \cite{Par2} provide the potential minimal sets $S_2=S_3=\{2,3,5,7,11,13\}$ for elliptic curves over quadratic and cubic fields. Recent unpublished works by  Kamienny, Stein and Stoll \cite{K-S-S} and Derickx, Kamienny, Stein and Stoll \cite{D-K-S-S} give the potential minimal sets $S_4=\{2,3,5,7,11,13,17\}$ and $S_5=\{2,3,5,7,11,13,17,19 \}$ for elliptic curves over fields of degree $4$, respectively $5$, over $\QQ$.  Then, outside these sets the local-global-divisibility principle holds. The minimality of such sets for the local-global problem remains an open question, as only  counterexamples for $2^n$ and  $3^n$, for all $n\geq 2$ are
 known (\cite{DZ2}, \cite{Pal}, \cite{Pal2} and \cite{Pal3}).
 \begin{thm}\label{main}
Let $ p $ be a prime number and let $ n $ be a positive integer.
Let $\E$ be an elliptic curve defined over a number field $ k $, which does not contain the field  $\Q ( \zeta_p + \overline{\zeta_p} )$.
Suppose that $\E$ does not admit any $k$-rational torsion point of exact order $ p $.
Then, a point $P \in \E ( k )$  is locally divisible by $p^n$ in $\E ( k_v )$ for all but finitely many valuations $ v $ if and only if $ P $ is globally divisible by $p^n$ in $\E ( k )$.
\end{thm}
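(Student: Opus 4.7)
The plan is to apply the cohomological criterion of Dvornicich--Zannier (\cite{DZ},\cite{DZ3}). Setting $G_n := \mathrm{Gal}(k(\mathcal{E}[p^n])/k)$ with its faithful action on $\mathcal{E}[p^n]\cong(\Z/p^n\Z)^2$, the criterion asserts that the local--global principle for divisibility by $p^n$ in $\mathcal{E}(k)$ is equivalent to the vanishing of the \emph{first local cohomology subgroup}
\[
H^1_{\mathrm{loc}}(G_n,\mathcal{E}[p^n]) \;=\; \bigcap_{\sigma\in G_n}\ker\!\bigl(H^1(G_n,\mathcal{E}[p^n])\to H^1(\langle\sigma\rangle,\mathcal{E}[p^n])\bigr),
\]
i.e.\ of the classes of cocycles $Z$ with $Z_\sigma\in(\sigma-1)\mathcal{E}[p^n]$ for every $\sigma$. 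The theorem is thus equivalent to the purely group-theoretic statement that, under our hypotheses, $H^1_{\mathrm{loc}}(G_n,\mathcal{E}[p^n])=0$ for every $n\ge 1$.

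First I would translate the two hypotheses. The Weil pairing identifies $\det\colon G_n\to(\Z/p^n\Z)^\ast$ with the $p^n$-th cyclotomic character; since $\Q(\zeta_p+\overline{\zeta_p})$ is the fixed field of $\{\pm1\}\subset\mathrm{Gal}(\Q(\zeta_p)/\Q)$, the assumption $k\not\supset\Q(\zeta_p+\overline{\zeta_p})$ produces $\sigma_0\in G_1$ with $\det(\sigma_0)\not\equiv\pm1\pmod p$. On the other hand $\mathcal{E}(k)[p]=0$ amounts to $\mathcal{E}[p]^{G_1}=0$. These two constraints force $G_1$ into one of two mutually exclusive regimes: either $G_1$ acts irreducibly on $\mathcal{E}[p]$, or it stabilises a line $L$ without admitting a $G_1$-invariant complement, so it acts on $L$ and on $\mathcal{E}[p]/L$ via nontrivial characters $\psi_1,\psi_2$ with $\psi_1\psi_2=\chi$.

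I would then settle the base case $n=1$. Given a local cocycle $Z\colon G_1\to\mathcal{E}[p]$, the standard move is to locate $\gamma\in G_1$ with $\gamma-1$ invertible on $\mathcal{E}[p]$ and to normalise $Z$ by subtracting $\partial\!\bigl((\gamma-1)^{-1}Z_\gamma\bigr)$ so that $Z_\gamma=0$; the cocycle identity then gives $Z_{\tau\gamma}=Z_\tau$ and $Z_{\gamma\tau}=\gamma Z_\tau$ for every $\tau$, which combined with locality forces $Z\equiv 0$. In the irreducible regime such a $\gamma$ is produced by a suitable word in $G_1$ involving $\sigma_0$. In the reducible regime I would work instead with the character decomposition: the inflation--restriction sequence attached to $0\to L\to\mathcal{E}[p]\to\mathcal{E}[p]/L\to 0$ reduces matters to local cohomology of one-dimensional $G_1$-modules, where the arithmetic hypothesis $\chi(\sigma_0)\ne\pm1$ is exactly what excludes the combinations of $\psi_1,\psi_2$ that could support a nonzero local class.

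For the inductive step $n\mapsto n+1$ I would apply inflation--restriction to the short exact sequence
\[
0\longrightarrow p^n\mathcal{E}[p^{n+1}]\longrightarrow\mathcal{E}[p^{n+1}]\longrightarrow\mathcal{E}[p^n]\longrightarrow 0
\]
of $G_{n+1}$-modules, whose kernel, multiplied by $p^n$, is $G_{n+1}$-isomorphic to $\mathcal{E}[p]$. A diagram chase shows that local classes in the middle term push to local classes on the right and pull back to local classes on the left, so vanishing of the outer local cohomology groups transfers to the middle; these vanishings are provided respectively by the $n=1$ result and by the induction hypothesis. I expect the reducible subcase at $n=1$ to be the genuine obstacle: no single $\gamma$ then acts with $\gamma-1$ invertible on all of $\mathcal{E}[p]$, and one must push the cocycle information through the characters $\psi_1,\psi_2$. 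It is precisely at this step that the hypothesis $k\not\supset\Q(\zeta_p+\overline{\zeta_p})$ enters essentially, rather than merely as a convenient invertibility statement.
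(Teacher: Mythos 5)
Your overall framing is right---the theorem reduces, via the Dvornicich--Zannier criterion (Theorem~\ref{si1}), to showing $H^1_{\rm loc}(G_n,\E[p^n])=0$, and the translation of the hypothesis $k\not\supset\Q(\zeta_p+\overline{\zeta_p})$ into a constraint on $\det$ is in the right spirit. (Minor point: the criterion is a sufficient condition, not an equivalence; the paper itself remarks that Theorem~\ref{si1} is not invertible. This does not hurt you since the other implication of the theorem is trivial.) But the heart of your plan, the induction on $n$, has a genuine gap, and the base-case case analysis is also off.

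First, the case analysis for $G_1$. Under the assumption $H^1(G_n,\E[p^n])\ne 0$ (which one may make for free, since otherwise there is nothing to prove), Lemma~\ref{lem:lem14} shows $G_1$ is either a cyclic group of diagonal matrices $\langle\rho\rangle$ or $\langle\rho,\sigma\rangle$ with $\sigma$ unipotent. Both are reducible: the irreducible case falls to Theorem~\ref{teo:teo11} and never reaches the nontrivial part of the argument. Your dichotomy ``irreducible, or stabilises a line with no invariant complement'' therefore both includes a vacuous alternative and \emph{omits} the genuine alternative $G_1=\langle\rho\rangle$ diagonal, which is reducible with an invariant complement. You would need to handle that case explicitly.

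Second, and more seriously, the inductive step does not go through as written. The exact sequence
$0\to p^n\E[p^{n+1}]\to\E[p^{n+1}]\to\E[p^n]\to 0$
is a sequence of $G_{n+1}$-modules, so the long exact sequence produces the groups $H^1(G_{n+1},\E[p])$ and $H^1(G_{n+1},\E[p^n])$ on the two sides---\emph{not} $H^1(G_1,\E[p])$ and $H^1(G_n,\E[p^n])$. Since $\E[p]^{H_{n+1}}=\E[p]$ and $\E[p^n]^{\ker(G_{n+1}\to G_n)}=\E[p^n]$, inflation-restriction gives injections $H^1(G_1,\E[p])\hookrightarrow H^1(G_{n+1},\E[p])$ and $H^1(G_n,\E[p^n])\hookrightarrow H^1(G_{n+1},\E[p^n])$ with cokernels landing in $H^1$ of the relevant kernel subgroups. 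Those cokernels are not controlled by your $n=1$ result or your induction hypothesis, so vanishing of $H^1_{\rm loc}(G_1,\E[p])$ and $H^1_{\rm loc}(G_n,\E[p^n])$ does not force vanishing of the outer $H^1_{\rm loc}$ terms in the long exact sequence. Beyond that, even granting vanishing of the outer terms, the diagram chase that ``pulls a local class back to a local class on the left'' is not automatic: if $[Z]\in H^1_{\rm loc}(G_{n+1},\E[p^{n+1}])$ dies in $H^1(G_{n+1},\E[p^n])$, then $Z$ is cohomologous to a cocycle with values in $p^n\E[p^{n+1}]$, but the local witnesses $W_\sigma$ for $Z$ live in $\E[p^{n+1}]$ and need not project to witnesses in $p^n\E[p^{n+1}]$. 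Pushforward preserves locality; pullback along the inclusion of a submodule does not.

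By contrast, the paper sidesteps both difficulties by working entirely inside $\GL_2(\Z/p^n\Z)$: it first shows $G_n$ is generated by its intersections with the diagonal, strictly upper triangular and strictly lower triangular subgroups (Proposition~\ref{nuovo1}, proved by induction on $n$ using the normality of the $p$-Sylow $H_n$ and conjugation by the diagonal lift $\rho_n$), then proves the local cohomology of each of the subgroups $\langle\rho_n,{\mathpzc{sL}}_n\rangle$, ${\mathpzc{U}}_n$, ${\mathpzc{L}}_n$ vanishes (Proposition~\ref{teo:teo62}, via Sah's lemma and the cyclicity of the commutator ${\mathpzc{U}}_n'$), and finally glues the cohomologies using the fact that the diagonal element $\rho_n$ lies in the intersection of the relevant subgroups, so that the two coboundary witnesses $P,Q$ are forced to agree by $\ker(\rho_n-I)$ being small. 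That gluing step is the real substitute for the inductive transfer you were hoping for, and it works because it is an equality of cochains on a generating set, not a diagram chase through an exact sequence with uncontrolled terms.
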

The mentioned cohomological criterium of Dvornicich and Zannier asserts that if the local cohomology of $G_n:=\textrm{Gal}(k({\mathcal{E}}[p^n])/k)$  is trivial, then there are no counterexamples (see Definition \ref{defloc} and Theorem \ref{si1}). Under the hypotheses of our theorem, we prove that the local cohomology of $ G_n $ is trivial. For divisibility by $p^2$, it happens that the structure of $G_2$ and consequently of its local cohomology is quite simple. The structure of $ G_n $ is however quite intricate. Thus, for the general case we cannot apply a direct approach like in the simpler case of the divisibility by $p^2$. Using an induction, we show that the groups $ G_n $ are generated by diagonal, strictly lower triangular and strictly upper triangular matrices. In addition we detect a special diagonal element in $G_n$. If there are no $k$-torsion points of exact order $ p $, the local cohomology of these subgroups or of their commutators is trivial. Thanks to the special diagonal element, we glue together the cohomologies and we conclude that the local cohomology of $ G_n $ is trivial, too.

As a nice consequence  of the deep theorem of L. Merel we produce a complete positive answer  to the question of Dvornicich and Zannier.

\begin{cor}\label{cor:cor42}
Let $ \E $ be an elliptic curve defined over any number field $k$.
Then, there exists a constant ${C} ( [k:\QQ] )$, depending only on the degree of $ k $, such that the local-global principle holds for divisibility by any power $p^n$ of primes $p > {C} ( [k:\QQ] )$. In addition $C( [k:\Q] ) \leq ( 3^{[k:\Q] /2} + 1 )^2$.
\end{cor}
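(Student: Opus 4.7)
The plan is to derive Corollary~\ref{cor:cor42} from Theorem~\ref{main} by verifying the two hypotheses of that theorem for all sufficiently large primes $p$, using uniform boundedness results for torsion of elliptic curves together with an elementary degree comparison.

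First I would dispose of the hypothesis that $\E$ admits no $k$-rational torsion point of exact order $p$. By Merel's theorem, for each integer $d \geq 1$ there exists a bound $B(d)$ such that the torsion subgroup of any elliptic curve defined over any number field of degree at most $d$ has order at most $B(d)$. Consequently, if $p > B([k:\QQ])$, then no elliptic curve $\E/k$ can carry a $k$-rational point of exact order $p$. To handle the second condition $k \not\supset \QQ(\zeta_p + \overline{\zeta_p})$, I use the fact that $[\QQ(\zeta_p + \overline{\zeta_p}):\QQ] = (p-1)/2$, so the containment is impossible as soon as $(p-1)/2 > [k:\QQ]$, i.e.\ $p > 2[k:\QQ]+1$. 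Setting $C([k:\QQ]) := \max\{B([k:\QQ]),\, 2[k:\QQ]+1\}$, both hypotheses of Theorem~\ref{main} hold for every prime $p > C([k:\QQ])$ and every elliptic curve $\E/k$, which gives the general assertion.

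For the explicit bounds $C(1)=7$ and $C(2)=13$, I would substitute the effective classifications of Mazur and Kamienny in place of Merel's bound. Over $\QQ$, Mazur's theorem shows that a $\QQ$-rational $p$-torsion point on an elliptic curve can exist only for $p \in \{2,3,5,7\}$, while the cyclotomic condition holds for every prime $p \geq 5$; hence every prime $p \geq 11$ satisfies both hypotheses of Theorem~\ref{main}, giving $C(1)=7$. Over a quadratic field $k$, Kamienny's theorem restricts such primes to $\{2,3,5,7,11,13\}$, and $k \not\supset \QQ(\zeta_p + \overline{\zeta_p})$ whenever $p \geq 7$, since then $(p-1)/2 \geq 3 > 2 = [k:\QQ]$. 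Combining these yields $C(2)=13$.

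There is really no serious obstacle beyond invoking the three deep theorems: once Theorem~\ref{main} is available, the two hypotheses decouple cleanly, and the torsion condition is the binding constraint. The only point worth noting is that the cyclotomic condition is automatically weaker than the torsion condition for the explicit bounds, so it plays no active role in the computation of $C(1)$ and $C(2)$.
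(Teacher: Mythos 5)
Your proposal is correct and follows essentially the same route as the paper: both reduce the corollary to Theorem~\ref{main} by using Merel to kill the torsion hypothesis beyond a degree-dependent bound, observing $[\QQ(\zeta_p + \overline{\zeta_p}):\QQ] = (p-1)/2$ to dispose of the cyclotomic hypothesis for $p > 2[k:\QQ]+1$, taking the maximum of the two bounds, and then substituting Mazur and Kamienny to get the explicit values $C(1)=7$ and $C(2)=13$. The only cosmetic difference is that you state Merel's theorem in terms of a bound on the torsion subgroup's order rather than directly as a bound on prime torsion orders; these formulations are equivalent for the purpose at hand.
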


\begin{proof} By \cite{Mer}, for every number field $ k $, there exists a constant $C_{merel} ( [k:\QQ] )$ depending only on the degree of $k$, such that, for every prime $p > C_{merel}  ( [k:\QQ] )$, no elliptic curve defined over $ k $ has a $k$-rational torsion point of exact order $p$.

\noindent Let $p_0$ be the largest prime such that $ k $ contains the field $\Q ( \zeta_{p_0} + \overline{\zeta_{p_0}} )$. Observe that $p_0 \leq 2 [k: \Q] + 1$.
Set
\[
{C} ( [k:\QQ] ) = {\rm max} \{ p_0, C_{merel} ( [k:\QQ] ) \}.
\] In an unpublished work, Oesterl\'e \cite{Oes} showed that $C_{merel} ( [k:\Q] ) \leq ( 3^{[k:\Q] /2} + 1 )^2$.
\noindent Then, apply Theorem~\ref{main}.
\end{proof}

The famous Mazur's Theorem
and further explicit versions of Merel's Theorem give:

  \begin{cor} Let \begin{description}
  \item[ ] ${C} ( 1 )=7$ for $\Q$;
  \item[ ] $C ( 2 )=13$ for quadratic fields;
  \item[ ] $C ( 3 )=13$ for cubic fields;
  \item[ ] $C ( 4 )=17$ for fields of
  degree $4$ over $\Q$;
  \item[ ] $C ( 5 )=19$ for fields of degree $5$ over $\Q$.

\end{description}
Let $\E$ be an elliptic curve defined over any number field of degree $d=1,2,3,4,5$. Then the local-global principle holds for divisibility by any power $p^n$ of primes $p > {C} ( d )$.
\end{cor}
\begin{proof}
Let $ k $ be a field of degree $d$ over $\Q$.
Observe that, for every $p > C ( d )$, $k$ does not contain $\Q ( \zeta_p + \overline{\zeta_p} )$.
Then, it suffices to replace the known  explicit constant for the previous corollary.
By the famous Mazur's Theorem (see \cite{Maz2}), no elliptic curve defined over $\Q$ has a rational point of exact prime order larger than $7$, then $C(1)=7$. By Kamienny ~\cite{Ke-Mo}, Kenku and Momose \cite{Ke-Mo2}, Parent \cite{Par} and \cite{Par2}, no elliptic curve defined over a quadratic or a cubic number field $ k $ has a $k$-rational point of exact prime order larger than $13$. Then $C(2)=C(3)=13$.  Further recent works in progress  by  Kamienny, Stein and Stoll \cite{K-S-S} and Derickx, Kamienny, Stein and Stoll \cite{D-K-S-S} exclude $k$-rational point of exact prime order larger than $17$, respectively $19$, for elliptic curves over number fields of degree $4$, respectively $5$. So $C(4)=17$ and $C(5)=19$. \end{proof}

\bigskip\noindent \emph{Acknowledgments}.  We deeply thank the referee for nice comments and for pointing out the works of Parent, Derickx, Kamienny, Stein and Stoll which give further explicit applications of our theorem. We would like to kindly thank F. Gillibert for her interesting remarks. The third author thanks the Swiss National Science Foundation for financial support.

\section{Preliminary results}\label{sec2}

Let $k$ be a number field and let $\mathcal{E}$ be an elliptic curve defined over $k$.
Let $p$ be a prime. %not $2$ or $3$
For every positive integer $n$, we denote by ${\mathcal{E}}[p^n]$  the $p^n$-torsion subgroup of ${\mathcal{E}}$ and by $K_n = k({\mathcal{E}}[p^n])$  the number field obtained by adding to $k$ the coordinates of the $p^n$-torsion points of ${\mathcal{E}}$.   By the Weil pairing, the field $ K_n$ is forced to contain a primitive $p^n$th root of unity $\zeta_{p^n} $ (see for example \cite[Chapter III, Corollary 8.1.1]{Sil}).
Let $G_n = \Gal ( K_n / k )$. As usual, we shall view $\E[p^n]$ as $ \Z/p^n\Z \times \Z/p^n\Z $ and consequently we shall represent $G_n$ as a subgroup of $\textrm{GL}_2 ( \Z / p^n \Z )$, denoted by the same symbol.

As mentioned, the answer to the \emph{Local-Global Divisibility Problem} for $p^n$ is strictly connected to the vanishing condition of the  cohomological  group $H^1(G_n, {\mathcal{E}}[p^n])$ and of the local cohomological group $H^1_{\rm loc}(G_n,{\mathcal{E}}[p^n])$.  Let us recall definitions and results for $\E$.
\par\bigskip

\begin{Def}[Dvornicich, Zannier \cite{DZ}] \label{defloc}Let $\Sigma$ be a group and let $M$ be a $\Sigma$-module.
We say that a cocycle $[c]=[\{Z_{\sigma}\}]\in H^1(\Sigma,M)$ satisfies the \emph{local conditions} if there exists $W_{\sigma}\in M$ such that $Z_{\sigma}=(\sigma - 1)W_{\sigma}$, for all $\sigma\in \Sigma$.
We denote by $H^1_{{\rm loc}}(\Sigma,M)$ the subgroup of $H^1(\Sigma,M)$ formed by such cocycles.
Equivalently, $H^1_{{\rm loc}} ( \Sigma, M )$ is the intersection of the kernels of the restriction maps $H^1 ( \Sigma, M ) \rightarrow H^1 ( C, M )$ as $C$ varies over all cyclic subgroups of $\Sigma$.\end{Def}
\begin{thm}[Dvornicich, Zannier \cite{DZ}] \label{si1}\par \noindent Assume that
$H^1_{\rm loc}(G_n,{\mathcal{E}}[p^n])=0.$
 Let $P\in {\mathcal{E}}(k)$ be a point locally divisible by $p^n$ almost everywhere in the completions $k_v$ of $k$. Then there exists a point $D\in {\mathcal{E}}(k)$, such that $P=p^nD$.
\end{thm}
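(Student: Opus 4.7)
The plan is to translate the global divisibility of $P$ by $p^n$ into the vanishing of an explicit class in $H^1(G_k, \mathcal{E}[p^n])$, where $G_k = \Gal(\bar k/k)$, and then to apply Chebotarev twice: first to descend this class from $G_k$ to $G_n$, and second to verify that it satisfies the local conditions of Definition~\ref{defloc}, so that the hypothesis $H^1_{\rm loc}(G_n,\mathcal{E}[p^n])=0$ forces it to be trivial.

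Concretely, I would fix $D \in \mathcal{E}(\bar k)$ with $p^n D = P$ and set $Z_\sigma := \sigma D - D \in \mathcal{E}[p^n]$ for $\sigma\in G_k$. By the Kummer exact sequence attached to $0\to \mathcal{E}[p^n]\to \mathcal{E}\xrightarrow{p^n}\mathcal{E}\to 0$, the class $[Z]\in H^1(G_k,\mathcal{E}[p^n])$ vanishes exactly when $P\in p^n\mathcal{E}(k)$, so it suffices to prove $[Z] = 0$. For any place $v$ of $k$ at which $P = p^n D_v$ with $D_v \in \mathcal{E}(k_v)$, one has $D - D_v \in \mathcal{E}[p^n]\subset K_n$, whence $Z_\sigma = (\sigma - 1)(D - D_v)$ for all $\sigma \in G_{k_v}$ and therefore $[Z]|_{G_{k_v}} = 0$. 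Restricting further, $Z|_{G_{K_n}}$ is a homomorphism $\phi\colon G_{K_n}\to\mathcal{E}[p^n]$ (since $G_{K_n}$ fixes $\mathcal{E}[p^n]$ pointwise) that vanishes on the decomposition subgroup at every place of $K_n$ lying over one of our almost-all good $v$; the finite extension of $K_n$ cut out by the kernel of $\phi$ is then completely split at a density-one set of primes, hence equals $K_n$ by Chebotarev, so $\phi = 0$. Inflation--restriction then shows that $[Z]$ is the inflation of a unique class $[\bar Z] \in H^1(G_n, \mathcal{E}[p^n])$.

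To conclude I verify $[\bar Z] \in H^1_{\rm loc}(G_n,\mathcal{E}[p^n])$. Given a cyclic subgroup $C = \langle\sigma\rangle \le G_n$, Chebotarev provides infinitely many unramified places $v$ of $k$ whose Frobenius in $G_n$ is a conjugate $g\sigma g^{-1}$; choosing $v$ outside the finite exceptional set of the hypothesis yields $[Z]|_{G_{k_v}} = 0$ by the computation above, and the injectivity of the inflation $H^1(gCg^{-1},\mathcal{E}[p^n]) \hookrightarrow H^1(G_{k_v},\mathcal{E}[p^n])$ (where $gCg^{-1}$ is the decomposition subgroup of $v$ in $G_n$) forces $[\bar Z]|_{gCg^{-1}} = 0$; by conjugation-invariance, $[\bar Z]|_C = 0$. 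Since $C$ was arbitrary, $[\bar Z] \in H^1_{\rm loc}(G_n, \mathcal{E}[p^n]) = 0$, whence $[Z] = 0$ and $P \in p^n \mathcal{E}(k)$. The main obstacle is not depth but bookkeeping: one must keep straight the various absolute and relative Galois groups, their decomposition subgroups, and the excluded finite sets of primes across the two Chebotarev arguments, and verify that inflation remains injective at each reduction step.
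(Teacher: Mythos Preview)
Your argument is correct and is precisely the standard Kummer--Chebotarev proof of this result; note, however, that the paper itself does not prove Theorem~\ref{si1} but merely quotes it from~\cite{DZ}, so there is nothing to compare against beyond observing that your sketch reconstructs the Dvornicich--Zannier argument. One small comment on bookkeeping: in your final step you invoke conjugation-invariance to pass from $[\bar Z]|_{gCg^{-1}}=0$ to $[\bar Z]|_C=0$, which is fine, but you could equally well avoid this by noting that Chebotarev already lets you choose the place $w$ of $K_n$ above $v$ so that the Frobenius at $w$ equals $\sigma$ on the nose.
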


In \cite{DZ3} they prove that this theorem is not invertible.
Moreover, the remark just after the main theorem and the first few lines of its proof give an intrinsic version of their main theorem  \cite{DZ3}.
\begin{thm}\label{teo:teo11}  Suppose that $\E$ does not admit any $k$-rational isogeny of degree $p$.
Then $H^1 ( G_n, \E[p^n] ) = 0$, for every $n \in \N$.
\end{thm}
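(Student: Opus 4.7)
The plan is an induction on $n$, after translating the hypothesis into a representation-theoretic statement. The assumption that $\E$ admits no $k$-rational isogeny of degree $p$ is equivalent, via the embedding $G_1 \hookrightarrow \GL_2 ( \F_p ) = \GL ( \E[p] )$, to the assertion that $\E[p]$ has no $G_1$-stable $\F_p$-line, i.e., $\E[p]$ is irreducible as an $\F_p[G_1]$-module.

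For the base case $n = 1$, I would prove $H^1 ( G_1, \E[p] ) = 0$. Since $\E[p]$ is $p$-torsion, restriction to a Sylow $p$-subgroup $P \subset G_1$ is injective on $H^1$, so if $p \nmid |G_1|$ the vanishing is immediate. Otherwise $P$ is cyclic of order $p$, generated by a unipotent matrix; irreducibility of $\E[p]$ forces $G_1$ to contain an element conjugating $P$ to a different Sylow, and since any two distinct unipotent subgroups of $\SL_2 ( \F_p )$ generate it, one obtains $G_1 \supseteq \SL_2 ( \F_p )$. A stable-classes analysis of $H^1 ( P, \E[p] )$ under the action of $N_{G_1}(P)/P$ (the diagonal torus modulo scalars) then forces $H^1 ( G_1, \E[p] ) = 0$, with minor case distinctions for $p = 2, 3$.

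For the inductive step, I would assume $H^1 ( G_{n-1}, \E[p^{n-1}] ) = 0$ and feed the short exact sequence of $G_n$-modules
\[
0 \to \E[p^{n-1}] \to \E[p^n] \xrightarrow{p^{n-1}} \E[p] \to 0
\]
into its long exact sequence, obtaining
\[
H^1 ( G_n, \E[p^{n-1}] ) \to H^1 ( G_n, \E[p^n] ) \to H^1 ( G_n, \E[p] ),
\]
and show that the two flanking terms vanish. For the right-hand term, I would apply inflation-restriction relative to $H = \Gal ( K_n / K_1 )$: the base case kills the inflation contribution, and I would verify $\mathrm{Hom} ( H, \E[p] )^{G_1} = 0$. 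The crucial structural input is that $H$ embeds into the kernel of the reduction $\GL_2 ( \Z / p^n \Z ) \to \GL_2 ( \F_p )$, a $p$-group filtered by subquotients isomorphic to $M_2 ( \F_p )$ with $G_1$ acting adjointly; since $M_2 ( \F_p ) \cong \F_p \oplus \mathfrak{sl}_2$ as an $\SL_2 ( \F_p )$-module and neither summand is isomorphic to the standard representation $\E[p]$, there is no nonzero $G_1$-equivariant map $M_2 ( \F_p ) \to \E[p]$, and hence none from $H$ either. The left-hand term $H^1 ( G_n, \E[p^{n-1}] )$ is handled symmetrically by inflation-restriction with respect to $\Gal ( K_n / K_{n-1} )$ together with the induction hypothesis.

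I expect the main obstacle to lie in the base case: the structure of $G_1$ for small primes $p = 2, 3$ requires careful case analysis. The secondary difficulty is the verification that irreducibility genuinely kills all the $\mathrm{Hom}$-terms arising from inflation-restriction; this is where the bookkeeping of how the adjoint action of $G_1$ on the kernel layers of $\GL_2 ( \Z / p^n \Z ) \to \GL_2 ( \F_p )$ matches up with the standard action on $\E[p]$ becomes delicate, and it is precisely where the geometric hypothesis is used most decisively.
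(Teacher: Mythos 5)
The paper does not actually prove Theorem~\ref{teo:teo11}; it is imported from Dvornicich--Zannier~\cite{DZ3}, so there is no in-paper proof to compare against. Your strategy (irreducibility of $\E[p]$ as an $\F_p[G_1]$-module, base case $n=1$, then inflation--restriction along the mod-$p$ and mod-$p^{n-1}$ kernels combined with the long exact sequence for $0\to\E[p^{n-1}]\to\E[p^n]\to\E[p]\to 0$) is a reasonable plan, but it has a concrete gap in the step that is supposed to kill the restriction terms.

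The issue is that irreducibility of $\E[p]$ does \emph{not} force $G_1\supseteq\SL_2(\F_p)$; that implication only holds when $p$ divides $|G_1|$. If $p\nmid|G_1|$ (so $H_1$ is trivial in the paper's notation), $G_1$ can sit inside the normalizer of a non-split Cartan, or be an exceptional subgroup with projective image $A_4$, $S_4$ or $A_5$, and still act irreducibly on $\E[p]$. Your decomposition $M_2(\F_p)\cong\F_p\oplus\mathfrak{sl}_2$ into $\SL_2(\F_p)$-modules, and the observation that neither piece is the standard representation, then gives no information about $\mathrm{Hom}_{G_1}(H,\E[p])$, since $G_1$ may intersect $\SL_2(\F_p)$ in a small group. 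Worse, the underlying assertion ``$\E[p]$ is not a $G_1$-subquotient of $M_2(\F_p)$'' is false in general: already for $G_1=\GL_2(\F_2)$ one has $\mathfrak{sl}_2(\F_2)/\F_2 I\cong\E[2]$ as a $G_1$-module, and for a non-split Cartan of order $d$ with $d\mid 2p-1$ the character $\chi^{1-p}$ appearing in $V\otimes V^{*}$ coincides with $\chi^{p}$, so $\E[p]$ again appears as a constituent. So ``there is no nonzero $G_1$-equivariant map $M_2(\F_p)\to\E[p]$, and hence none from $H$ either'' needs a genuinely different argument in the cases $p\nmid|G_1|$ and $p\le 3$, and it is precisely these cases that your sketch defers to ``minor case distinctions'' without indicating how they close. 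Until the vanishing of $\mathrm{Hom}(H,\E[p])^{G_1}$ (and the analogous term for $\Gal(K_n/K_{n-1})$) is established for \emph{all} irreducible $G_1\subset\GL_2(\F_p)$, the inductive step is incomplete.
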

Clearly, if the global cohomology $H^1 ( G_n, \E[p^n] ) $ is trivial then also the local cohomology $H^1_{\rm loc}(G_n,{\mathcal{E}}[p^n]).$ So by Theorem \ref{si1}, if $\E$ does not admit any $k$-rational isogeny of degree $p$, then the local-global principle holds for $p^n$.

The following lemma is essentially  proved  in the proof of the Theorem \ref{teo:teo11}, in \cite{DZ3} beginning of page 29.

\begin{lem}\label{lem:lem12}
Suppose that there exists  a nontrivial multiple of the identity $\tau \in G_1$.
Then $H^1 ( G_n, \E[p^n] ) = 0$, for every $n \in \N$.
\end{lem}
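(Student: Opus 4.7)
The plan is to reduce the problem to an application of Sah's lemma: if there exists $\tilde\tau \in G_n$ that is central in $G_n$ and acts on $\E[p^n]$ as a scalar $\omega \in (\Z/p^n\Z)^{*}$ with $\omega - 1$ a unit, then $H^1(G_n, \E[p^n]) = 0$. The argument is the standard one-liner: for any $1$-cocycle $f$, the identity $\sigma \tilde\tau = \tilde\tau \sigma$ combined with the cocycle relation produces $(\omega - 1) f(\sigma) = (\sigma - 1) f(\tilde\tau)$, so $f(\sigma) = (\sigma - 1) Q$ with $Q = (\omega - 1)^{-1} f(\tilde\tau)$, exhibiting $f$ as a coboundary.

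The first step is to produce such a $\tilde\tau$. Write $\tau = \lambda \, \mathrm{Id}$ with $\lambda \in \F_p^{*}$ and $\lambda \neq 1$, and let $d$ be the order of $\lambda$ in $\F_p^{*}$; then $d \mid p - 1$, so $\gcd(d,p) = 1$. I would pick any preimage $\tilde\tau_0$ of $\tau$ in $G_n$. The kernel of $G_n \to G_1$ is contained in the principal congruence subgroup $\mathrm{Id} + p \cdot M_2(\Z/p^n\Z)$, which is a $p$-group, so the order of $\tilde\tau_0$ has the form $d \cdot p^s$. Solving $p^s m \equiv 1 \pmod d$ by B\'ezout and setting $\tilde\tau = \tilde\tau_0^{p^s m}$ yields a lift of $\tau$ of order exactly $d$, coprime to $p$.

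The crux of the argument is to show that $\tilde\tau$ is then automatically a scalar matrix in $\GL_2(\Z/p^n\Z)$. Since $d$ is a unit in $\Z/p^n\Z$ and $X^d - 1$ is separable modulo $p$, Hensel's lemma lifts $\mu_d(\F_p)$ to a cyclic group $\mu_d \subset (\Z/p^n\Z)^{*}$ of order $d$. The group-algebra idempotents $e_\zeta = d^{-1} \sum_{i=0}^{d-1} \zeta^{-i} \tilde\tau^i$ then decompose $\E[p^n] = \bigoplus_{\zeta \in \mu_d} V_\zeta$ with $V_\zeta = \ker(\tilde\tau - \zeta \, \mathrm{Id})$. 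Reducing modulo $p$ collapses everything: because $\tau$ acts as $\lambda \, \mathrm{Id}$, the quotient $V_\zeta / p V_\zeta$ vanishes unless $\zeta$ reduces to $\lambda$, and the unique such $\zeta$ is the Teichm\"uller lift $\omega \in (\Z/p^n\Z)^{*}$ of $\lambda$. Nakayama's lemma then forces $V_\omega = \E[p^n]$ and $V_\zeta = 0$ for $\zeta \neq \omega$, so $\tilde\tau = \omega \, \mathrm{Id}$.

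To finish, $\tilde\tau = \omega \, \mathrm{Id}$ is central in $\GL_2(\Z/p^n\Z)$ and hence in $G_n$, while $\omega - 1 \equiv \lambda - 1 \not\equiv 0 \pmod p$ is a unit of $\Z/p^n\Z$, so Sah's lemma applies. The main obstacle is the scalarity step: an arbitrary lift of $\tau$ to $G_n$ is only scalar modulo $p$ and is not a priori central. The remedy is to restrict to a lift of order prime to $p$, whose semisimplicity lets the scalarity propagate from $\E[p]$ all the way to $\E[p^n]$.
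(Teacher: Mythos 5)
Your proof is correct. The paper itself does not reproduce a proof of this lemma but simply points to the argument at the beginning of page 29 of \cite{DZ3}; your reconstruction is self-contained and follows what is plainly the intended route, namely to produce a lift $\tilde\tau\in G_n$ of the nontrivial scalar $\tau$ whose order $d=\mathrm{ord}(\lambda)$ is prime to $p$, show that such a lift is forced to be the scalar matrix $\omega\,\mathrm{Id}$ with $\omega$ the Teichm\"uller lift of $\lambda$, and conclude by Sah's theorem (the paper's Lemma~\ref{lem:lem61}) since $\omega-1\equiv\lambda-1\not\equiv 0 \pmod{p}$ is a unit of $\Z/p^n\Z$. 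You correctly flag and resolve the one subtle point, that an arbitrary lift of a scalar need not be scalar in $G_n$, by first replacing it with its prime-to-$p$ power; and your idempotent decomposition combined with Nakayama is a sound way to force scalarity (the key input being that $\mu_d(\Z/p^n\Z)\to\mu_d(\F_p)$ is a bijection by Hensel, as $X^d-1$ is separable mod $p$ for $d\mid p-1$). If you want a lighter version of that scalarity step: write $\tilde\tau=\omega\,\mathrm{Id}+pM$ with $\omega$ the Teichm\"uller lift, expand $\tilde\tau^d=\mathrm{Id}$ binomially (legitimate since $\omega\,\mathrm{Id}$ is central and commutes with $M$), and use that $d\omega^{d-1}$ is a unit to bootstrap $M\equiv 0\pmod{p^r}$ for every $r$, hence $M=0$. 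Either way the argument is complete.
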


\noindent  Another remark along their proof concerns the group $G_1 \cap {\mathpzc{D}}$, where $\mathpzc{D}$  is the subgroup  of  diagonal matrices of $\textrm{GL}_2 ( \F_p )$.

\begin{cor}\label{cor:cor13}
Suppose that $G_1 \cap {\mathpzc{D}}$ is not cyclic. Then $H^1 ( G_n, \E[p^n] ) = 0$, for every $n\in \N$.
\end{cor}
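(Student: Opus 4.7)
The plan is to reduce the corollary to Lemma~\ref{lem:lem12}: if we can exhibit a non-trivial scalar matrix $aI \in G_1$, then Lemma~\ref{lem:lem12} immediately yields $H^1(G_n, \E[p^n]) = 0$ for every $n \in \N$. So the entire task is to show that the hypothesis ``$G_1 \cap \mathpzc{D}$ is not cyclic'' forces $G_1 \cap \mathpzc{D}$ to contain an element of the form $aI$ with $a \neq 1$.

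I would carry this out via a single homomorphism argument. Consider the map
\[
\phi : G_1 \cap \mathpzc{D} \longrightarrow \F_p^*, \qquad \mathrm{diag}(a,b) \longmapsto ab^{-1}.
\]
The kernel of $\phi$ is exactly the subgroup of scalar matrices contained in $G_1 \cap \mathpzc{D}$, while the image lies inside the cyclic group $\F_p^*$. If $\ker\phi$ were trivial, then $\phi$ would embed $G_1 \cap \mathpzc{D}$ into $\F_p^*$, which would force $G_1 \cap \mathpzc{D}$ to be cyclic, contradicting the hypothesis of the corollary. Hence $\ker\phi$ contains some $aI$ with $a \neq 1$, which is the desired non-trivial multiple of the identity in $G_1$; applying Lemma~\ref{lem:lem12} concludes the proof.

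There is essentially no serious obstacle here: the only point worth flagging is that one must use the cyclicity of $\F_p^*$ (not merely the commutativity of $\mathpzc{D} \cong \F_p^* \times \F_p^*$) to ensure that an injective image into $\F_p^*$ forces the source to be cyclic. This is the one structural fact that turns non-cyclicity of $G_1 \cap \mathpzc{D}$ into the existence of a non-trivial scalar, and the rest is immediate from the previous lemma.
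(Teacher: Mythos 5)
Your proposal is correct and follows exactly the same route as the paper: reduce to Lemma~\ref{lem:lem12} by exhibiting a nontrivial scalar matrix in $G_1$. The paper simply asserts that a non-cyclic subgroup of $\mathpzc{D}$ must contain a nontrivial multiple of the identity, whereas you have supplied the one-line justification (the homomorphism $\mathrm{diag}(a,b)\mapsto ab^{-1}$ into the cyclic group $\F_p^*$); this is the standard way to make the paper's claim precise.
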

\begin{proof} Since $G_1 \cap {\mathpzc{D}}$ is not cyclic, it contains at least a nontrivial multiple of the identity. Apply Lemma~\ref{lem:lem12}.\end{proof}

\section{Structure of the proof of the Main Theorem} \label{sec1}

 If $H^1 ( G_n, \E[p^n] )=0 $, then also the local cohomology is trivial and, by Theorem \ref{si1}, no counterexample can occur. Therefore we can assume with no restriction that  $H^1 ( G_n, \E[p^n] )\not=0 $. We first describe
the  structure of $G_1$.

\begin{lem}[ \cite{P-R-V} Lemma 7]\label{lem:lem14}
Suppose  that $H^1 ( G_n, \E[p^n] ) \neq 0$. Then either
\begin{equation*}G_1=\langle \rho \rangle  \,\,\,\,\quad {\rm or} \,\,\,\,\quad
   G_1 =\langle \rho,  \sigma \rangle,
\end{equation*}
where $\rho = {\lambda_1\,\,\, 0 \choose 0 \,\,\, \lambda_2}$ is either the identity or a diagonal matrix with $\lambda_1 \neq \lambda_2 \mod (p)$ and $\sigma={1\,\,\, 1 \choose 0\,\,\, 1}$, in a suitable basis of $\E[p]$.
\end{lem}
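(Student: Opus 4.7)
The plan is to combine Theorem~\ref{teo:teo11}, Lemma~\ref{lem:lem12}, and Corollary~\ref{cor:cor13} to pin down the structure of $G_1$. Since $H^1(G_n,\E[p^n])\neq 0$, Theorem~\ref{teo:teo11} forces $\E$ to admit a $k$-rational isogeny of degree $p$, so there is a $G_1$-stable line $L\subset\E[p]$; I pick a basis $\{e_1,e_2\}$ of $\E[p]$ with $\langle e_1\rangle=L$, so that $G_1$ sits inside the Borel subgroup of upper triangular matrices of $\GL_2(\F_p)$. Lemma~\ref{lem:lem12} gives that $G_1$ contains no nontrivial scalar, and Corollary~\ref{cor:cor13} gives that $\mathcal{D}_G:=G_1\cap\mathpzc{D}$ is cyclic, generated say by $\rho$; the no-scalar condition then forces $\rho=I$ or $\rho=\bigl(\begin{smallmatrix}\lambda_1 & 0\\ 0 & \lambda_2\end{smallmatrix}\bigr)$ with $\lambda_1\neq\lambda_2\bmod p$.

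Next I introduce the unipotent part $U_G:=G_1\cap U$, where $U=\bigl\{\bigl(\begin{smallmatrix}1 & *\\ 0 & 1\end{smallmatrix}\bigr)\bigr\}\cong\F_p$. Since $U_G$ is a subgroup of $\F_p$, it is either trivial or all of $U$. If $U_G=U$, then for any $g=\bigl(\begin{smallmatrix}a & b\\ 0 & d\end{smallmatrix}\bigr)\in G_1$ one can multiply on the right by $\bigl(\begin{smallmatrix}1 & -b/a\\ 0 & 1\end{smallmatrix}\bigr)\in U_G$ to kill $b$, and conclude $G_1=\mathcal{D}_G\cdot U_G$. Rescaling $e_2\mapsto c\,e_2$ multiplies the $(1,2)$ entries by $c$ while leaving both the diagonal part of every element and the invariance of $L$ untouched; choosing $c$ appropriately I can arrange the generator of $U_G$ to be $\sigma=\bigl(\begin{smallmatrix}1 & 1\\ 0 & 1\end{smallmatrix}\bigr)$, producing $G_1=\langle\rho,\sigma\rangle$.

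If instead $U_G=\{1\}$, the projection of $G_1$ onto the diagonal torus $T$ is injective, so $G_1$ is isomorphic to a subgroup of the abelian group $T$, hence abelian. The key observation is that every nontrivial $g\in G_1$ must have distinct diagonal entries: otherwise $g=\lambda I+N$ with $N$ nonzero strictly upper triangular, so $(\lambda I)N=N(\lambda I)$ and $N^2=0$ give $g^p=\lambda^p I=\lambda I\in G_1$; the absence of nontrivial scalars forces $\lambda=1$, but then $g$ is a nontrivial element of $U_G=\{1\}$, a contradiction. So $e_1$ is a common eigenvector of $G_1$ and the commuting diagonalizable elements share a second common eigenline $\langle v\rangle$; changing basis to $\{e_1,v\}$ (which preserves $L$) puts $G_1$ inside $\mathpzc{D}$, so $G_1=\mathcal{D}_G=\langle\rho\rangle$.

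The step I expect to need the most care is the verification in the $U_G=\{1\}$ case that $G_1$ is actually diagonalizable over $\F_p$ while retaining the flag structure: this relies on the $g^p$ computation to rule out elements of the form $\lambda I+N$, and then on the standard fact that a commuting family of semisimple matrices is simultaneously diagonalizable, with the $e_1$-eigenvector being common (hence the new basis can be chosen so that the first vector is still a generator of $L$).
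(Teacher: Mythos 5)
Your proof is correct, and it supplies exactly the content that the paper outsources to~\cite{P-R-V} (the text here just says ``the proof extends straightforward to a general positive integer $n$'' and gives no argument, so your reconstruction is self-contained from the ingredients available: Theorem~\ref{teo:teo11}, Lemma~\ref{lem:lem12}, Corollary~\ref{cor:cor13}). The skeleton --- pass to a $G_1$-stable line via the contrapositive of Theorem~\ref{teo:teo11}, place $G_1$ inside a Borel, and analyse the diagonal part $\mathcal{D}_G$ and the unipotent part $U_G$ separately --- is the natural and essentially forced route, and each step checks out: $U\cong\F_p$ has no proper nontrivial subgroups, the right-multiplication by an element of $U_G$ shows the diagonal factor of any $g$ lies in $\mathcal{D}_G$, and the $(\lambda I+N)^p=\lambda I$ computation together with the no-scalar condition of Lemma~\ref{lem:lem12} rules out nonsemisimple elements when $U_G$ is trivial, letting you diagonalize $G_1$ simultaneously while keeping $e_1$ as the first basis vector.

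Two cosmetic points. First, the rescaling $e_2\mapsto c\,e_2$ in the $U_G=U$ case is unnecessary: once $U_G=U$, the full unitriangular group is in $G_1$, so $\sigma=\bigl(\begin{smallmatrix}1&1\\0&1\end{smallmatrix}\bigr)$ is already there in the original basis. Second, in the $U_G=\{1\}$ case you conclude ``$G_1=\mathcal{D}_G=\langle\rho\rangle$'', but $\mathcal{D}_G$ was defined relative to the original basis and the diagonalization requires a new basis $\{e_1,v\}$; it is cleaner to say that in the new basis $G_1\subseteq\mathpzc{D}$, is cyclic by Corollary~\ref{cor:cor13} (applied in that basis), and its generator $\rho$ is $I$ or has $\lambda_1\neq\lambda_2\bmod p$ by Lemma~\ref{lem:lem12}. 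You should also note explicitly that a nontrivial $g$ with equal diagonal entries and $N=0$ is itself a nontrivial scalar, handled directly by Lemma~\ref{lem:lem12}; your ``$N$ nonzero'' case covers the remaining possibility. None of these affect the validity of the argument.
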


\begin{proof}  For $n=2$, we have proved the statement in \cite[Lemma 7]{P-R-V}. The proof extends straightforward to a general positive integer $n$. \end{proof}

Note that the order of $\rho$ divides $p-1$ and the order of $\sigma$ is $p$. We also sum up some  immediate, but useful remarks.  From the above description of $G_1$ we directly see that if $\lambda_1=1$, then there exists a torsion point of exact order $p$ defined over $k$.
Indeed the first element of the chosen basis is fixed by both $\rho$ and $\sigma$. In addition, if  $G_1=\langle \rho \rangle$ and  $\lambda_2=1$, then the corresponding eigenvector is a torsion point of exact order $p$ defined over $k$.
In the following, we can exclude these trivial cases and  we denote
\[
\rho =
\left(
\begin{array}{cc}
\lambda_1 & 0 \\
0 & \lambda_2 \\
\end{array}
\right)\,\,\,{\rm with}\,\,\,\lambda_1\not=\lambda_2 \mod (p) \,\,\,{\rm and}\,\,\,\lambda_1\not=1.
\]
Furthermore,  if $G_1$ is cyclic then we assume that  $\lambda_2\neq 1$. \par
\par\bigskip The proof of Theorem \ref{main}
 relies on the following:
\begin{pro}\label{pro:pro63}
Suppose that $H^1( G_n, \E[p^n] ) \neq 0$ and $\rho={\lambda_1 \,\, \,\,\,0 \choose  0 \,\, \,\,\, \lambda_2}$ has order at least $3$. Then we have:
\begin{itemize}
\item[1.] If $\lambda_1\not=1$ and $\lambda_2\not=1$, then $H^1_{{\rm loc}} ( G_n, \E[p^n] ) = 0$;
\item[2.]  If $G_1$ is not cyclic and $\lambda_2= 1$, then $H^1_{{\rm loc}} ( G_n, \E[p^n] ) = 0$.
\end{itemize}
\end{pro}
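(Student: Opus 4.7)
The plan is to show that any cocycle $Z\colon G_n\to\E[p^n]$ satisfying the local conditions is a coboundary, using a diagonal lift $\hat\rho\in G_n$ of $\rho$ as the main pivot. The first step is to establish the structure of $G_n$: by Theorem~\ref{teo:teo11} the hypothesis $H^1(G_n,\E[p^n])\neq 0$ produces a $k$-rational isogeny of degree $p$, so the reduction of $G_n$ modulo $p$ stabilizes the $\lambda_1$-eigenline $\langle e_1\rangle\subset\E[p]$. Since $|\rho|$ is coprime to $p$, Schur--Zassenhaus furnishes a lift $\hat\rho\in G_n$ of order $|\rho|$; after a basis change of $\E[p^n]$ we may take $\hat\rho=\mathrm{diag}(\tilde\lambda_1,\tilde\lambda_2)$. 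The $\hat\rho$-conjugation action on $K_n=\ker(G_n\to G_1)\subseteq I+pM_2(\Z/p^{n-1}\Z)$ decomposes the Lie algebra into three eigenspaces with eigenvalues $1,\tilde\lambda_1\tilde\lambda_2^{-1},\tilde\lambda_2\tilde\lambda_1^{-1}$, pairwise distinct exactly when $\lambda_1\neq\pm\lambda_2$, which is guaranteed by the hypotheses (in case~2 this is precisely $|\rho|\geq 3$). An induction on $n$ then exhibits $G_n$ as generated by $\hat\rho$, a diagonal part, a strictly upper-triangular unipotent part (containing a lift $\hat\sigma$ of $\sigma$ when $G_1$ is non-cyclic), and a strictly lower-triangular unipotent part.

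In case~1 ($\lambda_1,\lambda_2\neq 1$), $\hat\rho-1$ is invertible on $\E[p^n]$, so the local condition $Z_{\hat\rho}=(\hat\rho-1)W_{\hat\rho}$ uniquely determines $W_{\hat\rho}$; subtracting the coboundary $dW_{\hat\rho}$ I may assume $Z_{\hat\rho}=0$. The cocycle identity then forces $(\hat\rho-1)Z_\tau=0$, hence $Z_\tau=0$, for every $\tau$ commuting with $\hat\rho$, killing $Z$ on the diagonal generators. For an upper-unipotent generator $u$, local conditions put $Z_u\in\mathrm{Im}(u-1)\subseteq\langle e_1\rangle$; plugging the relation $u\hat\rho=\hat\rho u^{\tilde\lambda_2\tilde\lambda_1^{-1}}$ into the cocycle identity yields $(\tilde\lambda_2-1)Z_u=0$, hence $Z_u=0$ using $\lambda_2\neq 1$. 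A symmetric argument using $\lambda_1\neq 1$ handles the lower-unipotent generators. In case~2 ($\lambda_2=1$ and $G_1$ non-cyclic), $\hat\rho-1$ is not invertible, but $\tilde\lambda_1-1$ remains a unit and $G_n$ contains $\hat\sigma$. Here I would normalize $Z$ by a single coboundary $dW$ with $W=ae_1+be_2$ solving simultaneously $(\hat\rho-1)W=Z_{\hat\rho}$ and $(\hat\sigma-1)W=Z_{\hat\sigma}$: the first equation is solvable because $Z_{\hat\rho}\in\langle e_1\rangle$ and $\tilde\lambda_1-1$ is a unit, and the second is solvable because the upper-right entry of $\hat\sigma$ is a unit. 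After this reduction, cocycle arguments with $\hat\rho$ and $\hat\sigma$ kill $Z$ on upper-unipotent (powers of $\hat\sigma$), lower-unipotent, and residual diagonal elements of $K_n$, by intersecting $\ker(\hat\rho-1)=\langle e_2\rangle$ with $\ker(\hat\sigma-1)=\langle e_1\rangle$ in the last case.

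The main obstacle will be the structure theorem for $G_n$: the clean decomposition of $K_n$ requires the three $\hat\rho$-conjugation eigenvalues to be pairwise distinct, which can fail when $\lambda_1=-\lambda_2$. The order-$\geq 3$ hypothesis on $\rho$ is exactly what rules out the most delicate such collapse (namely $\lambda_1=-1$, $\lambda_2=1$ in case~2), and carrying the decomposition through the inductive step on $n$ is the main technical point. Once the structure is in hand, the cocycle calculations are a routine but careful use of the cocycle identity, the local conditions, and the units $\tilde\lambda_i-1$.
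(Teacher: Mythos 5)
Your proposal is essentially correct and reaches the same conclusion, but via a genuinely different cohomological mechanism than the paper. Both arguments rest on the same structural facts about $G_n$: the existence of a diagonal lift $\rho_n$ of $\rho$ of the same order (Lemma~\ref{lem:lem64}), the decomposition of $H_n$ into diagonal, strictly upper and strictly lower triangular pieces (Proposition~\ref{nuovo1}), the pairwise distinctness of the conjugation eigenvalues $1,\lambda_1\lambda_2^{-1},\lambda_2\lambda_1^{-1}$ (Remark~\ref{inversa}), and in case~2 the presence of ${1\,\,1\choose 0\,\,1}$ in $G_n$ (Lemma~\ref{nuovo2}). Where you diverge is in how the vanishing of $H^1_{\rm loc}$ is extracted. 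The paper first proves the vanishing of $H^1_{\rm loc}$ for the subgroups $\mathpzc{U}_n$, $\mathpzc{L}_n$ (via inflation--restriction to the cyclic commutator $\mathpzc{U}_n'$ plus Sah's lemma on the abelian quotient) and $\langle\rho_n,\mathpzc{sL}_n\rangle$ (via the $p$-Sylow criterion), and then \emph{glues}: a locally trivial cocycle is a coboundary $d P$ on one subgroup and $d Q$ on another, and $P=Q$ (resp.\ $P-Q\in\ker(\rho_n-I)$ dies against $\tau-I$) because $\rho_n$ lies in both subgroups and $\rho_n-I$ is injective (resp.\ has a controlled kernel). You instead work at the level of a single cocycle: normalize by one coboundary so that $Z_{\hat\rho}=0$ (and $Z_{\hat\sigma}=0$ in case~2, using that $(\hat\rho-1)$ and $(\hat\sigma-1)$ have complementary images so the two equations are simultaneously solvable), and then kill $Z$ on each family of generators using the twisted commutation relations $u\hat\rho=\hat\rho u^{\tilde\lambda_2\tilde\lambda_1^{-1}}$, $\tau\hat\rho=\hat\rho\tau^{\tilde\lambda_1\tilde\lambda_2^{-1}}$, $\delta\hat\sigma=\hat\sigma^{m'}\delta$, together with the local conditions that confine $Z_u$, $Z_\tau$, $Z_\delta$ to one-dimensional eigen-lines. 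This is really an inline version of Sah's lemma applied generator-by-generator, and avoids the inflation--restriction machinery; the paper's route is more modular (it isolates reusable statements about $\mathpzc{U}_n$ and $\mathpzc{L}_n$) while yours is more elementary. Two points you gloss over but which do need the paper's care: (a) the inductive step producing the decomposition of $H_n$ must treat the non-obvious case $\tau\equiv{1\,\,e\choose 0\,\,1}\pmod{p^{n-1}}$ with $p^{n-1}\nmid e$ (part~iii of Property~\ref{pro:pro66}), not just the $H_n^*$-level eigenspace splitting; and (b) producing a \emph{strictly} upper-triangular lift of $\sigma$ with unit upper-right entry is not automatic from "some lift of $\sigma$ exists''---it is exactly the content of Lemma~\ref{nuovo2}, which itself uses the decomposition. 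If you make those two steps explicit, your argument goes through.
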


\bigskip \noindent
The following sections are dedicated to the proof of this proposition. We conclude its proof in section \ref{sec3}.
We now clarify how to deduce Theorem \ref{main}
 from this proposition. In view of Theorem \ref{si1}, our main Theorem is implied by:
\begin{T}
Suppose $ k $ does not contain  $\Q ( \zeta_p + \overline{\zeta_p} )$.
Suppose that $\E$ does not admit any $k$-rational torsion point of exact order $ p $. Then $$H^1_{\rm loc}(G_n,{\mathcal{E}}[p^n])=0.$$
\end{T}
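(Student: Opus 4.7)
The plan is to reduce the claim to Proposition \ref{pro:pro63} by using the two hypotheses of Theorem 1' to rule out every configuration that falls outside its scope. If $H^1(G_n, \E[p^n]) = 0$ there is nothing to do, because $H^1_{\rm loc}$ sits inside $H^1$; so I may assume $H^1(G_n, \E[p^n]) \neq 0$ and apply Lemma \ref{lem:lem14} to fix a basis of $\E[p]$ in which $G_1$ is either $\langle \rho \rangle$ or $\langle \rho, \sigma \rangle$, with $\rho$ diagonal of eigenvalues $\lambda_1, \lambda_2$ and $\sigma$ upper unipotent.

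The first substantive step is to extract that $\rho$ has order at least $3$ from the hypothesis on $\zeta_p + \overline{\zeta_p}$. The determinant on $G_1 \subset \GL_2(\F_p)$ coincides with the mod-$p$ cyclotomic character (via the Weil pairing), so its image is $\Gal(k(\zeta_p)/k)$; since $\det(\sigma) = 1$, this image equals $\langle \lambda_1 \lambda_2 \rangle$. The hypothesis $k \not\supset \Q(\zeta_p + \overline{\zeta_p})$ says that $\Gal(k(\zeta_p)/k)$ is not contained in $\{\pm 1\} \subset \F_p^*$; being a subgroup of the cyclic group $\F_p^*$, it must therefore have order $\geq 3$. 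Hence $\lambda_1 \lambda_2$, and a fortiori $\rho$, has order at least $3$.

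Next I would use the no-$p$-torsion hypothesis to force $\lambda_1 \neq 1$ and, in the cyclic case, $\lambda_2 \neq 1$. If $\lambda_1 = 1$ then the first basis vector is fixed by both $\rho$ and $\sigma$, giving a $k$-rational point of exact order $p$; if $G_1 = \langle \rho \rangle$ and $\lambda_2 = 1$, the second basis vector is analogously $k$-rational. Both possibilities are excluded by hypothesis. The remaining situations are therefore: (i) $G_1$ cyclic with $\lambda_1, \lambda_2 \neq 1$; (ii) $G_1 = \langle \rho, \sigma \rangle$ with $\lambda_1, \lambda_2 \neq 1$; (iii) $G_1 = \langle \rho, \sigma \rangle$ with $\lambda_1 \neq 1$ and $\lambda_2 = 1$. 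Cases (i) and (ii) fall under Proposition \ref{pro:pro63}(1), while case (iii) is precisely Proposition \ref{pro:pro63}(2); in all cases we conclude $H^1_{\rm loc}(G_n, \E[p^n]) = 0$.

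The hard part is of course Proposition \ref{pro:pro63} itself, which is established in the subsequent sections through an inductive analysis of the structure of $G_n$ and a gluing of cohomology classes attached to diagonal and unipotent subgroups via a distinguished diagonal element; the step above is only the short bookkeeping argument tying Theorem 1' to that proposition.
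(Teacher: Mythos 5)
Your proof is correct and follows essentially the same reduction as the paper: exclude $\lambda_1=1$ and (in the cyclic case) $\lambda_2=1$ via the no-$p$-torsion hypothesis, show $\rho$ has order $\geq 3$ from $k\not\supseteq\Q(\zeta_p+\overline{\zeta_p})$, and invoke Proposition~\ref{pro:pro63}. The only cosmetic difference is in the order-$\geq 3$ step, where you argue via the determinant/Weil-pairing description of $\Gal(k(\zeta_p)/k)$ as $\langle\lambda_1\lambda_2\rangle$, whereas the paper argues by divisibility ($[k(\zeta_p):k]$ is coprime to $p$, divides $|G_1|$, hence divides $\mathrm{ord}(\rho)$); both hinge on $k(\zeta_p)\subseteq K_1$ and are equivalent.
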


\begin{proof}
If $H^1 ( G_n,\E[p^n] )= 0$, then clearly also the local cohomology is trivial and nothing has to be proven. We may assume $H^1 ( G_n,\E[p^n] )\neq 0$ and we show $H^1_{{\rm loc}} ( G_n, \E[p^n] ) = 0$ using Proposition~\ref{pro:pro63}.
Let $P_1,P_2$ be a basis of $\E[p]$ such that $G_1$ is like in Lemma \ref{lem:lem14}.  First of all we remark that if $k$ does not contain the field $\QQ(\zeta_p+\bar{\zeta_p})$, then the order of $\rho$  is $\geq 3$. In fact,  in this case,
 $[k ( \zeta_p ) : k] \geq 3$.  Recall  that, by Lemma \ref{lem:lem14}, the order of $\rho$ is the largest integer relatively prime to $p$ that divides $\vert G_1 \vert$. In addition $[k ( \zeta_p ) : k] \mid |G_1|$ and
$[k ( \zeta_p ) : k] \mid p-1$. Thus $\rho$ has order $\geq 3$.

 Observe that if $\lambda_1=1$, then $P_1$ is fixed by $G_1$ and therefore $P_1$ is a torsion point of exact order $p$ defined over $ k$. Moreover, if  $G_1$ is cyclic and $\lambda_2=1$, then $P_2$ is  a torsion point of exact order $p$ defined over $ k$. Thus, we can assume $\lambda_1\not=1$ and, furthermore, we can assume that if $\lambda_2=1$, then $G_1$ is not cyclic.
 By Proposition~\ref{pro:pro63}, we get $H^1_{{\rm loc}} ( G_n, \E[p^n] ) = 0$.
\end{proof}

\section{Description of the groups $G_n$}
We are going to choose a suitable basis of $\E[p^n]$. In such a basis, we decompose  $G_n$ by its subgroups of diagonal, strictly
upper triangular and strictly lower triangular matrices. The decomposition in such subgroups and eventually their commutators,
will simplify the study of the cohomology.

We first define some subgroups of $G_n$. Set
\[
L =
\begin{cases}
K_1,&\;\hbox{if} \hspace{5 pt} G_1 = \langle \rho \rangle;\\
K_1^{\langle \sigma \rangle}&\;\hbox{if} \hspace{5 pt} G_1 = \langle \rho, \sigma \rangle.
\end{cases}
\]
Since $\langle \sigma \rangle$ is normal in $G_1$, then $L/ k$ is a cyclic Galois extension.
Its Galois group is generated by a restriction of $\rho$ to $L$.
For every integer  $n$, let $$H_n = \Gal ( K_n/ L ).$$
 Since $L / k$ is Galois, $H_n$ is a normal subgroup of $G_n$.
Moreover it is a $p$-group and $[G_n: H_n]$ is relatively prime to $p$. Thus it is the unique $p$-Sylow subgroup of $G_n$.
We also observe that the exponent of $H_n$ divides $p^n$.
In fact it is isomorphic to a subgroup of $\GL_2 ( \Z / p^n \Z )$ and it is well known that every $p$-Sylow subgroup of $\GL_2 ( \Z / p^n \Z )$ has exponent $p^n$.
Then $$G_n = \langle \rho_n, H_n\rangle,$$ where $\rho_n$ is a lift of $\rho$ to $G_n$.
We first study the lifts $\rho_n$, then we study $H_n$.

\subsection{The lift of $\rho$}
\begin{lem}\label{lem:lem64}
Suppose $ H^1 ( G_n, \E[p^n] )\not=0 $ and  $\rho\not=I$. For any lift $\rho_n$ of $\rho$ to $G_n$, there exists a basis $Q_1,Q_2$ of $ \E[p^n] $, such that $\rho_n$ is diagonal in $G_n$ and the restriction $\rho_j$ of $\rho_n$ to $G_j$ is diagonal with respect to $p^{n-j}Q_1, p^{n-j}Q_2$ .
\end{lem}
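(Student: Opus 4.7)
I would first apply Lemma~\ref{lem:lem14} together with the hypotheses $H^1(G_n, \E[p^n]) \neq 0$ and $\rho \neq I$ to reduce to the case $\rho = \mathrm{diag}(\lambda_1, \lambda_2)$ in some basis of $\E[p]$, with $\lambda_1 \not\equiv \lambda_2 \pmod{p}$. The goal is then to lift this diagonalization of $\rho$ to one of $\rho_n$ acting on $\E[p^n]$; compatibility with the natural filtration $\E[p^j] = p^{n-j}\E[p^n]$ will follow automatically.

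The core step is to diagonalize $\rho_n$ over $\Z/p^n\Z$. The characteristic polynomial $\chi(T) \in (\Z/p^n\Z)[T]$ of $\rho_n$ reduces modulo $p$ to $(T - \lambda_1)(T - \lambda_2)$, whose factors are coprime because $\lambda_1 \not\equiv \lambda_2 \pmod{p}$. Hensel's lemma for the local ring $\Z/p^n\Z$ lifts this to a factorization $\chi(T) = (T - \mu_1)(T - \mu_2)$ in $(\Z/p^n\Z)[T]$, with $\mu_i \equiv \lambda_i \pmod{p}$ and hence $\mu_1 - \mu_2 \in (\Z/p^n\Z)^{\times}$. By Cayley--Hamilton, $\E[p^n]$ acquires the structure of a module over $(\Z/p^n\Z)[T]/(\chi(T))$, which the Chinese Remainder Theorem identifies with $\Z/p^n\Z \times \Z/p^n\Z$. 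This yields a decomposition $\E[p^n] = V_1 \oplus V_2$ with $V_i = \ker(\rho_n - \mu_i I)$; a Nakayama-type argument, observing that each $V_i \otimes \F_p$ is the one-dimensional $\lambda_i$-eigenspace of $\rho$ on $\E[p]$, shows that each $V_i$ is free of rank one over $\Z/p^n\Z$. Choosing generators $Q_1, Q_2$ of $V_1$ and $V_2$ produces a basis of $\E[p^n]$ in which $\rho_n$ acts as $\mathrm{diag}(\mu_1, \mu_2)$.

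For the second assertion, the natural projection $G_n \twoheadrightarrow G_j$ is realized through the action on $\E[p^j] = p^{n-j}\E[p^n]$, which has basis $p^{n-j}Q_1, p^{n-j}Q_2$. Since $\rho_n(p^{n-j}Q_i) = \mu_i \cdot p^{n-j}Q_i$, the matrix of $\rho_j$ in this basis is $\mathrm{diag}(\mu_1 \bmod p^j, \, \mu_2 \bmod p^j)$, which is diagonal as required.

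The only non-formal ingredient is the Hensel lift and the ensuing conclusion that each eigenspace $V_i$ is a free $\Z/p^n\Z$-module of rank one; both hinge on $\lambda_1 - \lambda_2 \in \F_p^{\times}$, which is guaranteed by $\rho \neq I$ via Lemma~\ref{lem:lem14}. I do not foresee a serious obstacle, and the hypothesis $H^1(G_n, \E[p^n]) \neq 0$ enters solely to invoke Lemma~\ref{lem:lem14}.
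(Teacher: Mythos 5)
Your proof is correct and rests on the same essential ingredient as the paper's: Hensel's lemma applied to the characteristic polynomial of $\rho_n$, using that the two eigenvalues of $\rho$ are distinct mod $p$ (which follows from $\rho \neq I$ via Lemma~\ref{lem:lem14}). The difference is organizational. The paper proceeds by induction on $n$: it assumes a diagonalizing basis of $\E[p^{n-1}]$, lifts it to a basis of $\E[p^n]$, applies Hensel to refine the eigenvalues by one more $p$-adic digit, and then reads off the eigenvectors; the compatibility with the filtration is built in by construction. You instead diagonalize in a single step: Hensel factors $\chi(T)$ completely over $\Z/p^n\Z$ into $(T-\mu_1)(T-\mu_2)$ with $\mu_1-\mu_2$ a unit, CRT splits $\E[p^n]$ into the two generalized eigenspaces $V_1 \oplus V_2$, and a Nakayama/rank count (using that the mod-$p$ eigenspaces of $\rho$ are each one-dimensional, and that $V_1 \oplus V_2$ must have total length $2n$) shows each $V_i$ is free of rank one. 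The compatibility with the bases $p^{n-j}Q_1, p^{n-j}Q_2$ of $\E[p^j]$ then follows by a one-line verification, exactly as you observe. Your module-theoretic route is somewhat cleaner and avoids the bookkeeping of the induction, at the small cost of invoking CRT and Nakayama where the paper only needs the Hensel step itself; both are fully valid.
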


\begin{proof}
The proof is by induction. For $n=1$ it is evident. We assume the claim for $n-1$ and we prove it for $n$. Let $\rho_n$ be a lift of $\rho$. Choose a basis ${R_1, R_2}$ of $\E[p^n]$, such that $\{ p R_1, p R_2 \}$ is the basis of $\E[p^{n-1}]$ that diagonalizes the restriction $\rho_{n-1}$ of $\rho_n$ to $G_{n-1}$.
Then $$\rho_n \equiv  \rho_j \mod ( p^j )$$ for $1 \leq j \leq n-1$, with $\rho_j$ as desired. The characteristic polynomial $P(x)$ of $\rho_n$ has integral coefficients. In addition $\lambda_{1,n-1},\lambda_{2,n-1}$ are distinguished roots of $P(x)$ modulo $p$, indeed by inductive hypothesis $\rho_{n-1} \equiv \rho \mod ( p )$. So the first derivate $P^\prime ( \lambda_{i,n-1}  )$ is not congruent to $0$ modulo $( p )$. By Hensel's Lemma, $P(x)$ has roots $\lambda_{i,n}=\lambda_{i,n-1}+t_i p^{n-1}$ with $0\le t_i \le p-1$.
Thus $ \rho_n $ is  diagonalizable in the basis of corresponding eigenvectors and a $p^{n-j}$ multiple gives eigenvectors for a lift of $\rho$ to $G_j$.
\end{proof}
We fix once and for all a basis $\{ Q_1, Q_2 \}$ of $\E[p^n]$ with the properties of the above lemma. Consequently we fix  the basis $\{ p^{n-j} Q_1, p^{n-j} Q_2 \}$ of $\E[p^j]$, for $1 \leq j \leq n-1$. The order of such a lift  of $\rho$ divides $p^{n-1}(p-1)$ and it is divided by the order of $\rho$. Taking an appropriate $p$ power of this lift, we obtain  a diagonal lift $\rho_n$ of $\rho$ such that the order of $\rho_n$ is equal to the order of $\rho$.
\begin{D}
We denote by \[
\rho_n =
\left(
\begin{array}{cc}
\lambda_{1, n} & 0 \\
0 & \lambda_{2, n} \\
\end{array}
\right)
\]   a diagonal lift of $\rho$ to $G_n$ of the same order than $\rho$.
\end{D}

\begin{rem}\label{inversa} Assume  that $H^1(G_n,\E[p^n])\not=0$ and that  $\rho$ has order $\geq 3$. Then $\lambda_{2, n} \lambda_{1, n}^{-1} - \lambda_{1, n} \lambda_{2, n}^{-1}$ is invertible  (where $\lambda_{i, n}^{-1}$ is the inverse of $\lambda_{i, n}$ in $( \Z / p^n \Z )^\ast$).
Indeed, if  $\lambda_{2, n} \lambda_{1, n}^{-1} - \lambda_{1, n} \lambda_{2, n}^{-1} \equiv 0 \mod ( p )$, then $\lambda_{1, n}^2 \equiv \lambda_{2, n}^2 \mod ( p )$.
Thus $\lambda_1^2 \equiv \lambda_2^2  \mod ( p )$ and $\rho^2$ is a scalar multiple of the identity. But in view of Corollary \ref{cor:cor13}, only the identity is such a multiple in $G_1$. Then $\rho^2=I$, which is a contradiction.
\end{rem}

\subsection{The decomposition of $G_n$}

We consider the following subgroups of $\textrm{GL}_2(\Z/p^n\Z)$:\\
    the  subgroup ${\mathpzc{sU}}$ of strictly upper triangular matrices;\\
     the subgroup  ${\mathpzc{sL}}$ of strictly lower triangular matrices;\\
      the      subgroup  ${\mathpzc{D}}$  of   diagonal   matrices.\\

\noindent
We decompose $H_n=\textrm{Gal} ( K_n/ L )$ in products of diagonal, strictly upper triangular and strictly lower triangular matrices. Then the group $G_n$ has a similar decomposition, as it is generated by $\rho_n$ and $H_n$.

\begin{pro}\label{nuovo1}
Assume that $ H^1 ( G_n, \E[p^n] )\not=0 $ and that the order of $\rho$ is at least $3$. Then, the group  $H_n$ is generated by matrices of ${\mathpzc{D}}_n=H_n \cap {\mathpzc{D}}$, ${\mathpzc{sU}}_n= H_n\cap {\mathpzc{sU}}$ and ${\mathpzc{sL}}_n=H_n \cap {\mathpzc{sL}}$.
\end{pro}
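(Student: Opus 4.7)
The plan is to argue by induction on $n$, using the short exact sequence
\[
1 \longrightarrow K_n \longrightarrow H_n \longrightarrow H_{n-1} \longrightarrow 1,
\]
where $K_n = H_n \cap \bigl(I + p^{n-1} M_2(\Z/p^n\Z)\bigr)$ is the kernel of reduction modulo $p^{n-1}$. The base case $n=1$ is immediate: $H_1$ is either trivial or equal to $\langle \sigma \rangle \subseteq \mathpzc{sU}_1$ in the chosen basis, and so is generated by the three claimed subgroups.

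The first observation for the inductive step is that $K_n$ is abelian (because $(I+p^{n-1}X)(I+p^{n-1}Y) \equiv I+p^{n-1}(X+Y) \pmod{p^n}$ for $n\geq 2$) and is normal in $G_n$, so conjugation by $\rho_n$ equips the identification $K_n \hookrightarrow M_2(\F_p)$, via $I + p^{n-1}X \leftrightarrow X$, with the adjoint $\rho$-action. This action has three distinct eigenvalues $1,\mu,\mu^{-1}$ on $M_2(\F_p)$, where $\mu=\lambda_1\lambda_2^{-1}$ (distinctness from Remark~\ref{inversa} together with $\mu\neq 1$), and the eigenspaces are the diagonal, strictly upper triangular, and strictly lower triangular matrices respectively. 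Since $|\langle \rho \rangle|$ is coprime to $p$, semisimplicity of $\F_p[\langle\rho\rangle]$ yields
\[
K_n = K_n^D \oplus K_n^U \oplus K_n^L, \qquad K_n^D \subseteq \mathpzc{D}_n,\; K_n^U \subseteq \mathpzc{sU}_n,\; K_n^L \subseteq \mathpzc{sL}_n.
\]

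The crucial step is then a \emph{lifting lemma}: each element of $\mathpzc{D}_{n-1}$ (respectively $\mathpzc{sU}_{n-1}$, $\mathpzc{sL}_{n-1}$) admits a preimage under $H_n \to H_{n-1}$ lying in $\mathpzc{D}_n$ (respectively $\mathpzc{sU}_n$, $\mathpzc{sL}_n$). In the diagonal case I would take an arbitrary lift $\tilde g \in H_n$ of $g' \in \mathpzc{D}_{n-1}$, factor $\tilde g = g_0(I + p^{n-1}M)$ with $g_0 \in \mathpzc{D}$ and $M$ purely off-diagonal, and verify that modulo $p^n$
\[
\rho_n \tilde g \rho_n^{-1} \tilde g^{-1} \;\equiv\; I + p^{n-1}\, g_0\bigl((\mu-1)M_U + (\mu^{-1}-1)M_L\bigr) g_0^{-1}.
\]
This commutator lies in $K_n$; since $\mu-1,\mu^{-1}-1$ are units in $\F_p$ and $K_n^U,K_n^L$ are stable under conjugation by invertible diagonal matrices (conjugation by $\mathpzc{D}$ commutes with the $\rho$-action, hence preserves each isotypic summand), the direct-sum decomposition forces $I + p^{n-1}M_U \in K_n^U$ and $I + p^{n-1}M_L \in K_n^L$. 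Their product is $I + p^{n-1}M$, so $g_0 = \tilde g(I+p^{n-1}M)^{-1} \in \mathpzc{D}_n$ is the desired lift. Granted the lemma for all three types, I would conclude by writing any $h \in H_n$ as a lift of a product decomposition of $\pi(h) \in H_{n-1}$ supplied by the inductive hypothesis, and absorbing the residual $K_n$-factor into the three types via its isotypic decomposition.

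The main obstacle will be establishing the lifting lemma in the strictly upper and strictly lower triangular cases, where the base matrix $g_0$ no longer commutes with $\rho_n$. The analogous $\rho_n$-conjugate identity then picks up extra contributions from $\rho_n g_0 \rho_n^{-1} g_0^{-1}$, which must be neutralized. I expect to handle this by repeating the above computation with a unipotent $g_0$, tracking these additional terms carefully, and exploiting both the $\rho$-stability of each summand $K_n^*$ and the freedom in choosing the initial lift $\tilde g$ within its $K_n$-coset to absorb the extra terms against $K_n^D, K_n^U, K_n^L$.
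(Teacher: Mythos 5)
Your overall architecture matches the paper's: induct on $n$, work with the kernel $K_n$ of reduction modulo $p^{n-1}$ (the paper's $H^*_n=\Gal(K_n/K_{n-1})$), exploit the isotypic decomposition of $K_n$ under $\rho_n$-conjugation with eigenvalues $1,\lambda_1\lambda_2^{-1},\lambda_2\lambda_1^{-1}$ (the paper's Property~\ref{Propr}), and then lift the inductive decomposition of $H_{n-1}$. Your treatment of the diagonal case is correct and is essentially the same computation as Property~\ref{pro:pro66}(i): the commutator $\rho_n\tilde g\rho_n^{-1}\tilde g^{-1}$ lands in $K_n$ because $g_0$ commutes with $\rho_n$, and the eigenspace decomposition together with the invertibility of $\mu-1,\mu^{-1}-1$ does the rest.

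The gap is exactly where you flag it, and the vague plan to ``absorb extra terms against $K_n^D,K_n^U,K_n^L$'' will not close it. When $g_0$ is unipotent the commutator $\rho_n\tilde g\rho_n^{-1}\tilde g^{-1}$ is no longer an element of $K_n$: for $g_0={1\,\,\,e\choose 0\,\,\,1}$ with $e$ a unit one gets $\rho_n g_0\rho_n^{-1}g_0^{-1}={1\,\,\,(\mu-1)e\choose 0\,\,\,\,\,1}\notin K_n$, so the $\F_p$-linear decomposition of $K_n$ is simply unavailable, and no rechoosing of $\tilde g$ in its $K_n$-coset changes this. The device the paper actually uses is different: one must replace $\tilde g^{-1}$ in the commutator by the power $\tilde g^{-\lambda_{2,n}\lambda_{1,n}^{-1}}$ (well-defined because $H_n$ has exponent dividing $p^n$), so that the $\rho_n$-conjugation eigenvalue of the lower-triangular piece is exactly cancelled; this produces an upper-triangular element $\gamma_1\in H_n$. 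For the strictly upper case with $e$ a unit (Property~\ref{pro:pro66}(iii)) this has to be iterated — take a second commutator $\gamma_2=\rho_n\gamma_1\rho_n^{-1}\gamma_1^{-1}$, then take a carefully chosen power $\gamma_2^{\lambda^{-1}}$ using the invertibility of $(\lambda_{1,n}\lambda_{2,n}^{-1}-\lambda_{2,n}\lambda_{1,n}^{-1})(\lambda_{1,n}\lambda_{2,n}^{-1}-1)$ and the $p$-adic valuation of $e$ — before one recovers ${1\,\,\,e\choose 0\,\,\,1}\in H_n$. These specific power manipulations are the real technical content of the inductive step; without them your lifting lemma for the non-diagonal types does not go through.
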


The proof of  Proposition \ref{nuovo1} is done by induction. The structure is technical and we do it along several steps.
Recall  that  $H_n$ restricts to either the identity, or $\langle {1\,\,\, 1 \choose 0\,\,\, 1}\rangle$ modulo $p$. Therefore every matrix of $H_n$  has the form
\[
\left(
\begin{array}{cc}
1 +p a & e \\
p c & 1 + p d \\
\end{array}
\right),
\]
with $e\in  \Z / p^n \Z$ and $a, c, d \in \Z / p^{n-1} \Z$. Of course every matrix can be decomposed as product of diagonal, strictly upper triangular and strictly lower triangular matrices. Here we shall prove that for $\tau\in H_n$ such factors are in $H_n$ as well.
So, we shall prove that certain matrices are in $H_n$. Since $H_n$  is normal in $G_n$, then for every $\tau\in H_n$, also $\rho_n^i \tau^m\rho_n^{-i} \in H_n$,  for every integer $i,m$. Besides, we recall that $H_n$ has  exponent dividing $p^n$ and therefore powers of $\tau$ are well defined for classes $m\in \Z / p^n \Z$.
Other useful matrices in $H_n$ are constructed  in  the following:
\begin{proprieta}\label{Propr}   Assume that  $ H^1 ( G_n, \E[p^n] )\not=0 $ and $\rho = {\lambda_1 \,\, \,\,\,0 \choose  0 \,\, \,\,\, \lambda_2}$ has order at least $3$.
 Let $H^*_{n} = \Gal ( K_n / K_{n-1} )\subset H_n$. Suppose that
\[
\tau = \left(
\begin{array}{cc}
1 + p^{n-1} a & p^{n-1} b \\
p^{n-1} c & 1 + p^{n-1} d \\
\end{array}
\right) \in H^*_{n} ,
\]
for certain $a, b, c, d \in \Z / p \Z$.
Then
\[
\left(
\begin{array}{cc}
1 + p^{n-1} a & 0\\
0 & 1 + p^{n-1} d \\
\end{array}
\right), \
\left(
\begin{array}{cc}
1 & p^{n-1} b\\
0 & 1 \\
\end{array}
\right), \
\left(
\begin{array}{cc}
1 & 0\\
p^{n-1} c & 1 \\
\end{array}
\right) \in H^*_{n} .
\]
\end{proprieta}
\begin{proof}
We recall the following property of basic linear algebra.
Let $ V $ be a vector space  and $W$ be a subspace of $V$.
Let $ \phi $ be an automorphism of $ V $ such that $\phi ( W ) = W$.
Let $v_1, \ldots , v_m \in V$ be eigenvectors of $\phi$ for distinct eigenvalues. If
$
v_1 + \ldots + v_m \in W,
$
then $v_i \in W$, for all $i$.

Let $V_{n}$ be the multiplicative subgroup of $\GL_2 ( \Z /p^n \Z )$ of the matrices congruent to $I$ modulo $p^{n-1}$. With the scalar multiplication given by taking  powers, the group $V_n$ is a $\Z / p \Z$-vector space of dimension $4$.

Observe that $H^*_{n} = \Gal ( K_n / K_{n-1} )$ is a vector subspace of $ V_{n} $.
The map $\phi_{n} \colon V_{n}  \rightarrow V_{n} \,;\,\tau \to \rho_n \tau \rho_n^{-1}$ is an automorphism. Since $H^*_{n} $ is normal in $G_n$, then $\phi_{n} ( H^*_{n}  ) = H^*_{n} $.
By a simple verification we can determine a basis of eigenvectors for $\phi_n$. To  the eigenvalue $1$ correspond the two eigenvectors
{\scriptsize $\left(\begin{array}{cc}
 1+ p^{n-1} & 0 \\
 0 &  1\\
\end{array}
\right)$,
 $\left(\begin{array}{cc}
 1 & 0 \\
 0 &  1+p^{n-1}\\
\end{array}
\right)$}; to    the eigenvalue  $\lambda_{1}  \lambda_{2}^{-1}$  corresponds the eigenvector {\scriptsize $\left(\begin{array}{cc}
 1 & p^{n-1} \\
 0 &  1\\
\end{array}
\right)$}; and to the   eigenvalue $\lambda_{2}  \lambda_{1}^{-1}$ corresponds the eigenvector {\scriptsize $\left(\begin{array}{cc}
 1 & 0 \\
 p^{n-1} &  1\\
\end{array}
\right)$}. Note that, by Remark \ref{inversa}, the last two eigenvalues are distinct. Applying the above  result from linear algebra to $ V_{n}$,  $ H^*_{n} $ and $ \phi_{n}$, we obtain the desired result.
\end{proof}

We are now ready to prove a property that represents the inductive step for the wished decomposition of $H_n$.
\begin{proprieta}\label{pro:pro66}
Assume that $ H^1 ( G_n, \E[p^n] )\not=0 $ and that $\rho = {\lambda_1 \,\, \,\,\,0 \choose  0 \,\, \,\,\, \lambda_2}$ has order at least $3$.
\begin{itemize}
\item[i.] Suppose
\[
\tau= \left(
\begin{array}{cc}
1 + p a & p^{n-1} b \\
p^{n-1}c & 1 + p d \\
\end{array}
\right)\in H_n,
\] with $a, d \in  \Z / p^{n-1} \Z$ and $b,c \in \Z/ p \Z$. Then
\[
\left(
\begin{array}{cc}
1  & p^{n-1} b \\
0 & 1  \\
\end{array}
\right), \
\left(
\begin{array}{cc}
1  & 0 \\
p^{n-1}c & 1  \\
\end{array}
\right)\in H_n.
\]
Consequently, $\tau$ decomposes in $H_n$ as
\[
\tau=
\left(
\begin{array}{cc}
1 + p a & 0 \\
0 & 1 + p d \\
\end{array}
\right)\left(
\begin{array}{cc}
1 & p^{n-1} b \\
0 & 1 \\
\end{array}
\right)
\left(
\begin{array}{cc}
1 & 0 \\
p^{n-1} c & 1 \\
\end{array}
\right).
\]
\item[ii.]
Suppose
\[\tau=
\left(
\begin{array}{cc}
1+p^{n-1} a & p^{n-1} b \\
pc & 1 + p^{n-1} d \\
\end{array}
\right)\in H_n
\]
with $a, b ,d \in  \Z/ p \Z$ and $c \in \Z/ p^{n-1} \Z$. Then
\[
\left(
\begin{array}{cc}
1 + p^{n-1} a & 0 \\
0 & 1 + p^{n-1} d  \\
\end{array}
\right), \
\left(
\begin{array}{cc}
1 & p^{n-1} b \\
0 & 1  \\
\end{array}
\right) \in H_n.
\]
Consequently, $\tau$ decomposes in $H_n$ as
\[\tau=
\left(
\begin{array}{cc}
1 &0 \\
pc & 1 \\
\end{array}
\right)
\left(
\begin{array}{cc}
1+ p^{n-1} a & 0 \\
0 & 1+p^{n-1} d \\
\end{array}
\right)
\left(
\begin{array}{cc}
1 & p^{n-1} b \\
0 & 1 \\
\end{array}
\right).
\]
\item[iii.] Suppose
\[
\tau= \left(
\begin{array}{cc}
1 + p^{n-1} a & e \\
p^{n-1} c & 1 + p^{n-1} d \\
\end{array}
\right)\in H_n,
\] with $e \in  \Z / p^n \Z$ and $a, c, d \in \Z/ p \Z$. Then
\[
\left(
\begin{array}{cc}
1 + p^{n-1} ( a - e c )  & 0 \\
0 & 1 + p^{n-1} d  \\
\end{array}
\right), \
\left(
\begin{array}{cc}
1  & - p^{n-1} e d  \\
0 & 1  \\
\end{array}
\right), \
\left(
\begin{array}{cc}
1 & 0 \\
p^{n-1} c & 1  \\
\end{array}
\right)\in H_n.
\]
Consequently, $\tau$ decomposes in $H_n$ as
\[
\tau=
\left(
\begin{array}{cc}
1 & e \\
0 & 1 \\
\end{array}
\right)
\left(
\begin{array}{cc}
1 + p^{n-1} ( a - e c )  & 0 \\
0 & 1 + p^{n-1} d  \\
\end{array}
\right)
\left(
\begin{array}{cc}
1  & - p^{n-1} e d   \\
0 & 1  \\
\end{array}
\right)
\left(
\begin{array}{cc}
1 & 0 \\
p^{n-1} c & 1  \\
\end{array}
\right).
\]
\end{itemize}

\end{proprieta}
\begin{proof} Recall that $\tau^m$ is well defined for classes $m \in \Z/p^n \Z$.\par
Part i.  Observe that
\[
\tau =
\left(
\begin{array}{cc}
1 + p a & 0 \\
0 & 1 + p d \\
\end{array}
\right)\left(
\begin{array}{cc}
1 & p^{n-1} b \\
p^{n-1} c & 1 \\
\end{array}
\right).
\]
We are going to show that the second matrix of the product is in $H_n$ and so must be the other. Since $H_n$ is normal in $G_n$, the matrix $\rho_n \tau \rho_n^{-1} \tau^{-1}\in H_n$.
A tedious but  simple computation gives
\[
\rho_n \tau \rho_n^{-1} \tau^{-1} =
\left(
\begin{array}{cc}
1 & \hspace{0.1cm} p^{n-1} b ( \lambda_{1, n}  \lambda_{2, n}^{-1} - 1 ) \\
p^{n-1} c ( \lambda_{2, n}  \lambda_{1, n}^{-1} - 1 ) & 1 \\
\end{array}
\right) \in H_n.
\]
This matrix is in $H^*_n$, indeed it reduces to the identity modulo $p^{n-1}$.
Applying  Property~\ref{Propr} we get
\[
\left(
\begin{array}{cc}
1 & p^{n-1} b ( \lambda_{1, n}  \lambda_{2, n}^{-1} - 1 ) \\
0 & 1 \\
\end{array}
\right), \
\left(
\begin{array}{cc}
1 & 0 \\
p^{n-1} c ( \lambda_{2, n}  \lambda_{1, n}^{-1} - 1 ) & 1 \\
\end{array}
\right) \in H^*_n \subseteq H_n.
\]
By remark \ref{inversa}, we know $\lambda_{1, n} \not \equiv \lambda_{2, n} \mod ( p )$.  So $( \lambda_{1, n}  \lambda_{2, n}^{-1} - 1 )$ and  $( \lambda_{2, n}  \lambda_{1, n}^{-1} - 1 )$ are invertible in $\Z/p^n\Z$. Taking the associated inverse power, we obtain
\[
{1\,\,\,p^{n-1} b  \choose 0\,\,\, 1 } \quad {\rm and } \quad {1\,\,\,0 \choose p^{n-1} c\,\,\, 1 } \in H^*_{n}  \subset H_n.\]
Thus
$\tau$ decomposes in $H_n$ as
\[
\tau=
\left(
\begin{array}{cc}
1 + p a & 0 \\
0 & 1 + p d \\
\end{array}
\right)\left(
\begin{array}{cc}
1 & p^{n-1} b \\
0 & 1 \\
\end{array}
\right)
\left(
\begin{array}{cc}
1 & 0 \\
p^{n-1} c & 1 \\
\end{array}
\right).
\]

Part ii.  This proof is similar to the previous one.
Observe
\[
\tau =
\left(
\begin{array}{cc}
1 & 0 \\
p c & 1 \\
\end{array}
\right)\left(
\begin{array}{cc}
1 + p^{n-1} a & p^{n-1} b \\
0 & 1 + p^{n-1} d \\
\end{array}
\right).
\]

\noindent  By induction, we can prove
$$\tau^{\lambda_{2,n}\lambda_{1,n}^{-1}}=\left(
\begin{array}{cc}
1 + p^{n-1} a \lambda_{2, n} \lambda_{1, n}^{-1} & p^{n-1} b  \lambda_{2,n}\lambda_{1,n}^{-1} \\
p c \lambda_{2,n}\lambda_{1,n}^{-1} & 1+  p^{n-1} d \lambda_{2, n} \lambda_{1, n}^{-1}  \\
\end{array}
\right).$$
In addition
\[
\rho_n \tau \rho_n^{-1} \tau^{- {\lambda_{2,n}\lambda_{1,n}^{-1}}} =
\left(
\begin{array}{cc}
1 + p^{n-1} a ( 1 - \lambda_{2, n} \lambda_{1, n}^{-1} ) & p^{n-1} b ( \lambda_{1, n}  \lambda_{2, n}^{-1} - \lambda_{2, n} \lambda_{1, n}^{-1} ) \\
0 & 1 + p^{n-1} d ( 1 - \lambda_{2, n}  \lambda_{1, n}^{-1} ) \\
\end{array}
\right) \in H_n.
\]  As this matrix is in $H_n$ and reduces to the identity mod $p^{n-1}$, it is also in $H^*_n$.
Recall that $\lambda_{1, n}^2 \not \equiv \lambda_{2, n}^2 \mod ( p )$. Applying Property~\ref{Propr}  and taking appropriated powers, we get \[\left(
\begin{array}{cc}
1 + p^{n-1} a & 0 \\
0 & 1+p^{n-1}d \\
\end{array}
\right) \quad  { \rm and } \quad \left(
\begin{array}{cc}
1  & p^{n-1}b \\
0 & 1  \\
\end{array}
\right)\in H^*_{n}  \subset H_n.\]

\noindent Thus $\tau$ decomposes in $H_n$ as
\[
\tau=
\left(
\begin{array}{cc}
1 & 0 \\
p c & 1 \\
\end{array}
\right)\left(
\begin{array}{cc}
1 + p^{n-1} a & 0 \\
0 & 1 + p^{n-1} d \\
\end{array}
\right)
\left(
\begin{array}{cc}
1 & p^{n-1} b \\
0 & 1 \\
\end{array}
\right).
\]

Part iii. If $p^{n-1} \mid e$, then $\tau \in H^*_{n}$ and the assertion follows from Property~\ref{Propr}. Therefore we can assume that $p^{n-1}$ does not divide $e$.
We compute $\tau^{\lambda_{2, n} \lambda_{1, n}^{-1} }$ which is an element of $H_n$.
 By induction, we get \[
\tau^{\lambda_{2, n} \lambda_{1, n}^{-1} }=
\left(
\begin{array}{cc}
1 + p^{n-1} a'' & e \lambda_{2,n}\lambda_{1,n}^{-1} + p^{n-1} b''\\
p^{n-1} \lambda_{2,n}\lambda_{1,n}^{-1} c  & 1 + p^{n-1} d'' \\
\end{array}
\right),
\]
with $a'',b'',d'' \in \Z / p \Z$.
As  $H_n$ is normal and $\lambda_{2, n} \lambda_{1, n}^{-1}$ is invertible, the following matrix is in $H_n$. We have
$$\gamma_1 = \rho_n \tau \rho_n^{-1} \tau^{-\lambda_{2, n} \lambda_{1, n}^{-1}}
 =
\left(
\begin{array}{cc}
1 + p^{n-1} a^\prime & e ( \lambda_{1, n}  \lambda_{2, n}^{-1} - \lambda_{2, n}  \lambda_{1, n}^{-1} ) + p^{n-1} b^\prime  \\
0  & 1 + p^{n-1} d^\prime
\end{array}
\right) \in H_n,$$
where $a^\prime, b^\prime, d^\prime \in \Z / p \Z$.
As $H_n$ is normal, also the following matrix is an element of $H_n$:
\[
\gamma_2 = \rho_n \gamma_1 \rho_n^{-1} \gamma_1^{-1} =
\left(
\begin{array}{cc}
1 & \hspace{0.1cm} e ( \lambda_{1, n}  \lambda_{2, n}^{-1} - \lambda_{2, n}  \lambda_{1, n}^{-1} ) ( \lambda_{1, n}  \lambda_{2, n}^{-1} - 1 )  + p^{n-1} e^\prime\\
0 & 1 \\
\end{array}
\right),
\]
where $e^\prime \in \Z / p \Z$.
Recall that  $l= ( \lambda_{1, n}  \lambda_{2, n}^{-1} - \lambda_{2, n}  \lambda_{1, n}^{-1} ) ( \lambda_{1, n}  \lambda_{2, n}^{-1} - 1 ) $ is invertible and that $p^{n-1}$ does not divide $e$.  So $e=p^r f$ with $f$ coprime to $p$ and $r<n-1$. Then $\lambda=l+p^{n-r-1}e'f^{-1}$ is invertible in $\Z/p^n\Z$ and $\gamma_2^{\lambda^{-1}}={1\,\,\, e \choose 0\,\,\, 1} $. Thus ${1\,\,\, e \choose 0\,\,\, 1} $ is  in  $ H_n$.
A simple computation gives \[
\tau=
\left(
\begin{array}{cc}
1 & e \\
0 & 1 \\
\end{array}
\right)
\left(
\begin{array}{cc}
1 + p^{n-1} ( a - e c )  & - p^{n-1} e d \\
p^{n-1} c & 1 + p^{n-1} d  \\
\end{array}
\right).
\] The second matrix is in $H^*_n$.
By Property~\ref{Propr}, the matrices  ${1 + p^{n-1} ( a - e c )\hspace{0.2cm} 0  \choose 0\hspace{0.2cm} 1 + p^{n-1} d }$, ${1\hspace{0.2cm} - p^{n-1} e d \choose 0 \hspace{0.2cm} 1 }$, ${1 \hspace{0.2cm} 0  \choose p^{n-1} c  \hspace{0.2cm} 1 } $ are in $  H^*_{n} $ and consequently in $ H_n$.
Thus $\tau$ decomposes in $H_n$ as
\[
\tau=
\left(
\begin{array}{cc}
1 & e \\
0 & 1 \\
\end{array}
\right)\left(
\begin{array}{cc}
1 + p^{n-1} ( a - e c )  & 0 \\
0 & 1 + p^{n-1} d \\
\end{array}
\right)
\left(
\begin{array}{cc}
1 & - p^{n-1} e d \\
0 & 1 \\
\end{array}
\right)
\left(
\begin{array}{cc}
1 & 0 \\
p^{n-1} c & 1 \\
\end{array}
\right).
\]
\end{proof}
The above property is exactly the inductive step to decompose $H_n$ as product of diagonal, strictly upper triangular  and strictly lower triangular  matrices.

\begin{proof}[Proof of Proposition \ref{nuovo1}]
Proceed by induction. For $H_1$, which is either the identity or  $\langle {1\,\,\, 1 \choose 0\,\,\, 1}\rangle$, the claim  is clear. Suppose that $H_r$ is decomposable as product of such matrices for $r< n$. We show it for $n$.
Let $\tau$ be a matrix in $H_n$. Then the reduction $\tau_{n-1}$ of $\tau$ mod $p^{n-1}$ is a product $\prod \delta_i$ of diagonal, strictly upper triangular and strictly lower triangular matrices. Consider lifts $\tilde{\delta}_i$ of $\delta_i$ to $H_n$. Then $\tau= \tilde{\delta}  \prod \tilde{\delta}_i$, where $\tilde{\delta}$ reduced to the identity mod  $p^{n-1}$. Therefore it is sufficient to prove the assertion for a matrix reducing to a diagonal, to a strictly upper triangular or to a strictly lower triangular matrix mod $p^{n-1}$.
By  Property  \ref{pro:pro66}, the matrix $\tau $ can be decomposed in the desired product.
\end{proof}

\subsection{Some commutators}

To study the cohomology we still need to describe the commutators of some of the subgroups of $G_n$.
We denote by ${\mathpzc{U}}$  the subgroup of  upper triangular matrices  of $\textrm{GL}_2(\Z/p^n\Z)$ and by
 ${\mathpzc{L}}$ the  subgroup of   lower   triangular   matrices of $\textrm{GL}_2(\Z/p^n\Z)$.
\begin{lem}
\label{nuovo2}
The commutators  ${\mathpzc{U}}_n^\prime$ and ${\mathpzc{L}}_n^\prime$ of the groups ${\mathpzc{U}}_n= G_n \cap {\mathpzc{U}}$ and ${\mathpzc{L}}_n=G_n \cap {\mathpzc{L}}$ in $G_n$ are cyclic.
If in addition $ H^1 ( G_n, \E[p^n] )\not=0 $, the order of $\rho$ is at least $3$ and $G_1$ is not cyclic, then \[{\mathpzc{U}}_n^\prime=
\bigg \langle\left(
\begin{array}{cc}
1 & 1 \\
0 & 1 \\
\end{array}
\right)\bigg \rangle.
\]
\end{lem}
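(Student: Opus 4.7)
The first assertion, that $\mathpzc{U}_n^\prime$ and $\mathpzc{L}_n^\prime$ are cyclic, should follow from an elementary calculation. For any $A, B \in \mathpzc{U}_n$ the diagonal parts commute, so the diagonal of $[A,B] = A B A^{-1} B^{-1}$ is the identity; hence $[A,B] = {1 \,\,\, \beta \choose 0 \,\,\, 1}$ for some $\beta \in \Z/p^n\Z$. The map $[A,B] \mapsto \beta$ realizes $\mathpzc{U}_n^\prime$ as a subgroup of the additive group $\Z/p^n\Z$, and every subgroup of $\Z/p^n\Z$ is cyclic. The argument for $\mathpzc{L}_n^\prime$ is symmetric.

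For the sharper statement assume the additional hypotheses. Since $G_1$ is not cyclic, Lemma~\ref{lem:lem14} gives $\sigma = {1 \,\,\, 1 \choose 0 \,\,\, 1} \in G_1$, and $\sigma$ lies in $H_1$. Pick any lift $\tilde\sigma \in H_n$ of $\sigma$ and apply Proposition~\ref{nuovo1} to write $\tilde\sigma$ as a product of elements of $\mathpzc{D}_n$, $\mathpzc{sU}_n$ and $\mathpzc{sL}_n$. Reducing modulo $p$: the factors in $\mathpzc{D}_n$ become diagonal in $H_1 = \langle \sigma \rangle$ and hence equal the identity; the factors in $\mathpzc{sL}_n$ have the form ${1 \,\,\, 0 \choose pc \,\,\, 1}$ and also reduce to the identity. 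Therefore the $\mathpzc{sU}_n$-factors alone must reproduce $\sigma$ modulo $p$, and since strictly upper unitriangular matrices multiply by adding their top-right entries, at least one such factor $M = {1 \,\,\, e \choose 0 \,\,\, 1} \in \mathpzc{sU}_n$ must have $e \in (\Z/p^n\Z)^\ast$.

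Since $M \in \mathpzc{sU}_n \subseteq \mathpzc{U}_n$ and $\rho_n \in \mathpzc{U}_n$, a direct computation yields
\[
[\rho_n, M] = \left(\begin{array}{cc} 1 & e(\lambda_{1,n}\lambda_{2,n}^{-1} - 1) \\ 0 & 1 \end{array}\right) \in \mathpzc{U}_n^\prime.
\]
Because $\rho$ has order at least $3$ we have $\lambda_1 \neq \lambda_2 \pmod p$, so $\lambda_{1,n}\lambda_{2,n}^{-1} - 1$ is a unit (cf.\ Remark~\ref{inversa}); together with the invertibility of $e$, the $(1,2)$-entry above is a unit $u \in (\Z/p^n\Z)^\ast$. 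Raising $[\rho_n,M]$ to the power $u^{-1}$ then produces ${1 \,\,\, 1 \choose 0 \,\,\, 1} \in \mathpzc{U}_n^\prime$. Combined with the containment $\mathpzc{U}_n^\prime \subseteq \langle {1 \,\,\, 1 \choose 0 \,\,\, 1}\rangle$ established in the first part, equality follows.

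The main obstacle is the middle step, namely extracting from the decomposition provided by Proposition~\ref{nuovo1} a strictly upper triangular element of $G_n$ with $(1,2)$-entry a unit modulo $p$. The key observation that makes this work is that, when $G_1$ is non-cyclic, both the diagonal and the strictly lower triangular factors of any lift of $\sigma$ become trivial modulo $p$, so the entire $(1,2)$-contribution to $\sigma \bmod p$ must come from the $\mathpzc{sU}_n$-factors.
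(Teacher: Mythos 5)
Your proof is correct and follows essentially the same route as the paper's. Part~1 is the same computation: commutators of upper triangular matrices are strictly upper unitriangular, which form a cyclic group. For Part~2, the paper says somewhat tersely that ``since $\sigma_n$ does not restrict to a diagonal matrix, at least one of its factors is of the type $\delta = {1\,\,\, b \choose 0\,\,\, 1}$ with $b\not\equiv 0 \pmod p$'', and then forms the commutator $\rho_n\delta\rho_n^{-1}\delta^{-1}$ exactly as you do; you usefully spell out the justification for that terse step (factors in ${\mathpzc{D}}_n$ and ${\mathpzc{sL}}_n$ reduce to $I$ mod $p$, so the ${\mathpzc{sU}}_n$-factors must produce $\sigma$ mod $p$, and they multiply additively on the $(1,2)$-entry). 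One very minor remark: the fact $\lambda_1\neq\lambda_2\pmod p$ comes directly from Lemma~\ref{lem:lem14} once $\rho\neq I$, rather than from Remark~\ref{inversa} (which is the stronger statement $\lambda_1^2\not\equiv\lambda_2^2$); your conclusion is still correct.
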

\begin{proof}
The commutator subgroup ${\mathpzc{U}}_n^\prime$ is generated by the elements $\delta \gamma \delta^{-1} \gamma^{-1}$,
with \[
\delta =
\left(
\begin{array}{cc}
a & b \\
0 & d \\
\end{array}
\right),
\gamma =
\left(
\begin{array}{cc}
a^\prime & b^\prime \\
0 & d^\prime \\
\end{array}
\right)
\in {\mathpzc{U}}_n,
\]
where the entries are  in $ \Z/ p^n \Z$ and  the elements on the diagonals are invertible modulo $ p^n $.
A short computation shows that
\begin{equation}\label{eqn:rel61}
\delta \gamma \delta^{-1} \gamma^{-1} =
\left(
\begin{array}{cc}
1 & ( a b^\prime - a^\prime b + b d^\prime - b^\prime d ) d^{-1} d^{{\prime}^{-1}}  \\
0 & 1 \\
\end{array}
\right).
\end{equation}
Then
\[{\mathpzc{U}}_n^\prime=
\bigg \langle\left(
\begin{array}{cc}
1 & p^j \\
0 & 1 \\
\end{array}
\right)\bigg \rangle,
\]
 for an integer $j \in \N$.
The proof for ${\mathpzc{L}}_n^\prime$ is analogous.

Suppose that in addition $H^1(G_n,\E[p^n])\neq 0$, the order of $\rho$ is at least $3$ and $G_1$ is not cyclic. Then $\sigma= { 1\,\,\, 1 \choose 0\,\,\, 1}\in G_1$. Let $\sigma_n$ be a lift of $\sigma$ to $G_n$. By Proposition~\ref{nuovo1}, $\sigma_n$ decomposes as a product of diagonal, strictly upper triangular and strictly lower triangular matrices. Since $\sigma_n$ does not restrict to a diagonal matrix,  at least one of its factors is of the type \[
\delta =
\left(
\begin{array}{cc}
1 & b \\
0 & 1 \\
\end{array}
\right) \,\,\,\,{\rm with}\,\,\,\,  b \not\equiv 0 \mod ( p ).
\]

\noindent A simple computation gives
\[
\rho_n \delta \rho_n^{-1} \delta^{-1} =
\left(
\begin{array}{cc}
1 & ( \lambda_{1, n} \lambda_{2, n}^{-1} - 1 ) b \\
0 & 1 \\
\end{array}
\right)\in {\mathpzc{U}}_n^\prime.
\]
Recall that $\lambda_{1, n} \not \equiv \lambda_{2, n} \mod ( p )$ and $b \not \equiv 0 \mod ( p )$. Then a power of this matrix is $ {1\,\,\, 1 \choose 0\,\,\, 1} \in {\mathpzc{U}}_n^\prime$.

\end{proof}

\section{Proof of Proposition 8} \label{sec3}
In this section we study the cohomology of $G_n$. We recall a useful classical lemma.

\begin{lem}[Sah Theorem,~\cite{Lan} Theorem 5.1]\label{lem:lem61}
Let $\Sigma$ be a group and let $M$ be a $\Sigma$-module.
Let $\alpha$ be in the center of $\Sigma$.
Then $H^1 (\Sigma, M )$ is annihilated by the map $x \rightarrow \alpha x - x$ on $M$.
In particular, if this map is an automorphism of $M$, then $H^1 ( \Sigma, M ) = 0$.
\end{lem}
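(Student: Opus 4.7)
The plan is to work directly at the level of cocycles. Let $c\colon \Sigma \to M$, $\sigma \mapsto c_\sigma$, be a $1$-cocycle representing a class in $H^1(\Sigma, M)$, so that $c_{\sigma\tau} = c_\sigma + \sigma c_\tau$ for all $\sigma, \tau \in \Sigma$. My goal is to show that the twisted cocycle $\sigma \mapsto (\alpha - 1) c_\sigma$ is a coboundary in $M$.

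The key step will be to exploit the centrality of $\alpha$ by expanding $c_{\alpha\sigma} = c_{\sigma\alpha}$ with the cocycle relation in both orders. This should produce
\[
c_\alpha + \alpha c_\sigma \;=\; c_\sigma + \sigma c_\alpha,
\]
which rearranges to $(\alpha - 1) c_\sigma = (\sigma - 1) c_\alpha$. This identity says precisely that $\sigma \mapsto (\alpha - 1) c_\sigma$ is the coboundary associated to the fixed element $c_\alpha \in M$, and therefore vanishes in $H^1(\Sigma, M)$. Since $[c]$ was arbitrary, multiplication by $\alpha - 1$ kills every class, proving the first assertion.

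For the \emph{in particular} clause I would argue as follows. Centrality of $\alpha$ also guarantees that the map $\phi\colon M \to M$, $x \mapsto \alpha x - x$, commutes with the $\Sigma$-action, since $\sigma \phi(x) = \sigma \alpha x - \sigma x = \alpha \sigma x - \sigma x = \phi(\sigma x)$. Hence $\phi$ is a $\Sigma$-module map and induces an endomorphism $\phi_\ast$ of $H^1(\Sigma, M)$. By the first part, $\phi_\ast$ is identically zero; by hypothesis, $\phi$ is an automorphism of $M$, so the induced $\phi_\ast$ is an automorphism of $H^1(\Sigma, M)$. The only group on which the zero map is an automorphism is the trivial group, so $H^1(\Sigma, M) = 0$.

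I do not anticipate any real obstacle: the entire content reduces to the single one-line manipulation of the cocycle identity using $\alpha \sigma = \sigma \alpha$. This is the standard Sah-type argument, and the automorphism statement is then a formality once annihilation by $\alpha - 1$ is in hand.
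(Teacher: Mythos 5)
Your proof is correct. The paper does not actually prove this lemma---it cites it to Lang (\cite{Lan}, Theorem 5.1) without argument---but the cocycle computation you give, expanding $c_{\alpha\sigma}=c_{\sigma\alpha}$ both ways to obtain $(\alpha-1)c_\sigma=(\sigma-1)c_\alpha$ and reading off the coboundary, is precisely the standard proof of Sah's lemma found in the cited source, and the functoriality argument for the \emph{in particular} clause is the usual one.
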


In the following proposition, we study the relation between the  eigenvalues of $\rho = {\lambda_1\,\,\, 0 \choose 0\,\,\,\,\,\,\lambda_2}$ and the triviality of  the local cohomology  of certain subgroups of $G_n$.  Such subgroups have intersections that allows us to glue those cohomologies together and to deduce the triviality of the local cohomology of $G_n$.

\begin{pro}\label{teo:teo62}
Assume  that $H^1( G_n, \E[p^n] ) \neq 0$ and that $\rho = {\lambda_1 \,\, \,\,\,0 \choose  0 \,\, \,\,\, \lambda_2}$ has order at least $3$.  Then,  we have:
\begin{enumerate}
 \item The groups $H^1_{{\rm loc}} ( \langle \rho_n, {\mathpzc{sU}}_n \rangle, \E[p^n] )$ and $H^1_{{\rm loc}} ( \langle \rho_n, {\mathpzc{sL}}_n \rangle, \E[p^n] )$ are trivial;
\item If $ \lambda_1 \neq 1 $ and $ \lambda_2 \neq 1 $, then $H^1_{{\rm loc}} ( {\mathpzc{U}}_n, \E[p^n] )$  and $H^1_{{\rm loc}} ( {\mathpzc{L}}_n, \E[p^n] )$   are trivial;
\item If $G_1$ is not cyclic and $ \lambda_2 = 1 $, then $H^1_{{\rm loc}} ( {\mathpzc{U}}_n, \E[p^n] ) = 0$.
\end{enumerate}
\end{pro}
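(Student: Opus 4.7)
My plan is to treat the three parts with a common strategy: exhibit each group as a semidirect product with $\mathpzc{sU}_n$ (or $\mathpzc{sL}_n$) as the normal subgroup, kill the inflation term in the Hochschild--Serre sequence, and then use the resulting injection together with the cyclicity of $\mathpzc{sU}_n$ to conclude. The underlying mechanism is this: $\mathpzc{sU}_n$ is a subgroup of $\{{1\,\,x\choose 0\,\,1}:x\in\Z/p^n\Z\}\cong\Z/p^n\Z$, hence cyclic, so a locally trivial cocycle restricted to it is a coboundary; combined with an injection of $H^1$ of the ambient group into $H^1(\mathpzc{sU}_n,\E[p^n])$, this forces any locally trivial class to vanish.

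For Part~1, I would take $\Sigma_U=\langle\rho_n,\mathpzc{sU}_n\rangle$. Since $\rho_n{1\,\,x\choose 0\,\,1}\rho_n^{-1}={1\,\,\lambda_{1,n}\lambda_{2,n}^{-1}x\choose 0\,\,1}$ and $\lambda_{1,n}\lambda_{2,n}^{-1}$ is a unit, $\mathpzc{sU}_n$ is normal in $\Sigma_U$ and $\Sigma_U/\mathpzc{sU}_n\cong\langle\rho_n\rangle$ (the intersection is trivial, since non-identity diagonal matrices are not unipotent). As the order of $\rho_n$ equals that of $\rho$ and divides $p-1$, it is coprime to $p$; since $\E[p^n]^{\mathpzc{sU}_n}$ is a $p$-group, $H^1(\langle\rho_n\rangle,\E[p^n]^{\mathpzc{sU}_n})=0$, and inflation-restriction yields the injection $H^1(\Sigma_U,\E[p^n])\hookrightarrow H^1(\mathpzc{sU}_n,\E[p^n])$. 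A locally trivial class restricts to zero on the cyclic $\mathpzc{sU}_n$, so injectivity forces it to vanish. The case of $\Sigma_L$ is symmetric.

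For Parts~2 and 3, I would write $\mathpzc{U}_n=\mathpzc{D}_n^U\ltimes\mathpzc{sU}_n$, where $\mathpzc{D}_n^U:=\mathpzc{U}_n\cap\mathpzc{D}$ is abelian and contains $\rho_n$, and $\mathpzc{sU}_n$ is normal in $\mathpzc{U}_n$ by a short matrix check. The inflation term to kill is now $H^1(\mathpzc{D}_n^U,\E[p^n]^{\mathpzc{sU}_n})$, and since $\rho_n$ is central in $\mathpzc{D}_n^U$, Sah's Lemma~\ref{lem:lem61} does the job provided $\rho_n-1$ acts as an automorphism of $\E[p^n]^{\mathpzc{sU}_n}$. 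In Part~2, $\lambda_1,\lambda_2\not\equiv 1\pmod{p}$ makes both $\lambda_{i,n}-1$ units of $\Z/p^n\Z$; since $\E[p^n]^{\mathpzc{sU}_n}$ splits along the $Q_1$- and $Q_2$-axes under the diagonal action, $\rho_n-1$ acts as an automorphism. In Part~3, where $\lambda_{2,n}=1$ (a lift of $1$ of order coprime to $p$ must be $1$), this direct argument fails on the $Q_2$-axis; here I would invoke Lemma~\ref{nuovo2}, whose conclusion $\mathpzc{U}_n^\prime=\langle{1\,\,1\choose 0\,\,1}\rangle$, combined with the fact that commutators of upper triangular matrices lie in $\mathpzc{sU}$, forces $\mathpzc{sU}_n=\langle{1\,\,1\choose 0\,\,1}\rangle$ of order $p^n$. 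Consequently $\E[p^n]^{\mathpzc{sU}_n}=\langle Q_1\rangle$, on which $\rho_n-1$ acts as the unit $\lambda_{1,n}-1$, so Sah applies and the common recipe concludes.

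The main obstacle is Part~3: since $\rho_n-1$ fails to be invertible on all of $\E[p^n]$, one must first cut down to the right submodule, and this requires identifying the exact shape of $\mathpzc{sU}_n$. Lemma~\ref{nuovo2} is the bridge---by controlling the commutator subgroup $\mathpzc{U}_n^\prime$ it forces $\mathpzc{sU}_n$ to be as large as possible, exactly enough to shrink the fixed submodule to the $Q_1$-axis where invertibility holds. Without this structural input, the Sah argument would collapse on the $Q_2$-direction.
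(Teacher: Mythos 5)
Your proof is correct, and it follows the same overall architecture as the paper's argument (inflation--restriction through a cyclic normal subgroup, Sah's lemma to kill the inflation term, and local triviality to kill the restriction term), but with a different and in some ways cleaner choice of normal subgroup in Parts~2 and~3, and a more self-contained argument in Part~1.

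For Part~1, the paper simply observes that $\mathpzc{sL}_n$ is the $p$-Sylow subgroup of $\langle\rho_n,\mathpzc{sL}_n\rangle$, that $H^1_{\rm loc}(\mathpzc{sL}_n,\E[p^n])=0$ since $\mathpzc{sL}_n$ is cyclic, and then invokes Proposition 2.5 of Dvornicich--Zannier to pass from the $p$-Sylow to the whole group. You instead run inflation--restriction over the exact sequence $1\to\mathpzc{sU}_n\to\Sigma_U\to\langle\rho_n\rangle\to 1$ and kill the inflation term by the coprimality of $|\langle\rho_n\rangle|$ with $p$. Both are valid; yours avoids the external citation and is marginally more explicit.

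For Parts~2 and~3, the paper uses the commutator subgroup $\mathpzc{U}_n^\prime$ as the cyclic normal subgroup in the inflation--restriction sequence, and Lemma~\ref{nuovo2} is used twice: once to prove $\mathpzc{U}_n^\prime$ is cyclic (Part~2) and once to identify $\mathpzc{U}_n^\prime=\langle{1\,\,1\choose 0\,\,1}\rangle$ when $G_1$ is not cyclic (Part~3). You use $\mathpzc{sU}_n$ instead, which is cyclic for free (it sits inside $\{{1\,\,x\choose 0\,\,1}\}\cong\Z/p^n\Z$) and is normal in $\mathpzc{U}_n$ by a direct matrix check, so in Part~2 you do not need Lemma~\ref{nuovo2} at all. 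In Part~3 you still invoke Lemma~\ref{nuovo2}, but only to obtain $\mathpzc{U}_n^\prime=\langle{1\,\,1\choose 0\,\,1}\rangle$, from which the sandwich $\mathpzc{U}_n^\prime\subseteq\mathpzc{sU}_n\subseteq\langle{1\,\,1\choose 0\,\,1}\rangle$ forces $\mathpzc{sU}_n=\langle{1\,\,1\choose 0\,\,1}\rangle$, giving $\E[p^n]^{\mathpzc{sU}_n}=\langle Q_1\rangle$ exactly as the paper obtains $\E[p^n]^{\mathpzc{U}_n^\prime}\subseteq\langle Q_1\rangle$. The end computations (invertibility of $\rho_n-I$ on the relevant fixed submodule, Sah's lemma on the abelian quotient, restriction to a cyclic subgroup) then coincide with the paper's.

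One small overstatement: you assert $\mathpzc{U}_n=\mathpzc{D}_n^U\ltimes\mathpzc{sU}_n$, which would require that every upper-triangular matrix in $G_n$ has its diagonal part separately in $G_n$; this is not self-evident and not established by Proposition~\ref{nuovo1}. Fortunately it is also not needed: for Sah's lemma you only need $\mathpzc{U}_n/\mathpzc{sU}_n$ to be abelian (it embeds in $(\Z/p^n\Z)^\times\times(\Z/p^n\Z)^\times$ via the diagonal entries) and to contain the class of $\rho_n$. Replacing $H^1(\mathpzc{D}_n^U,\E[p^n]^{\mathpzc{sU}_n})$ by $H^1(\mathpzc{U}_n/\mathpzc{sU}_n,\E[p^n]^{\mathpzc{sU}_n})$ throughout repairs this without changing anything else.
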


\begin{proof}

Part 1. We prove the triviality of $H^1_{{\rm loc}} ( \langle \rho_n, {\mathpzc{sL}}_n \rangle, \E[p^n] )$. The triviality of $H^1_{{\rm loc}} ( \langle \rho_n, {\mathpzc{sU}}_n \rangle, \E[p^n] )$  is similar and it is left to the reader.

Remark that ${\mathpzc{sL}}_n$ is cyclic generated  by ${1\,\,\, 0 \choose p^j\,\,\, 1}$ where  $p^j$  is the minimal power of $p$ dividing all the entries $c$ of any matrix ${1\,\,\, 0 \choose c\,\,\, 1} \in G_n$. Then, we immediately get $H^1_{{\rm loc}} ( {\mathpzc{sL}}_n, \E[p^n] ) = 0$.
Moreover ${\mathpzc{sL}}_n$ is a normal subgroup of $\langle \rho_n, {\mathpzc{sL}}_n\rangle$.
The order of $\rho_n$ is equal to the order of $\rho$, which is relatively prime to $p$. Thus  ${\mathpzc{sL}}_n$ is the $p$-Sylow subgroup of $\langle \rho_n, {\mathpzc{sL}}_n\rangle$.
By~\cite[Proposition 2.5]{DZ},  if  $H^1_{{\rm loc}} ( {\mathpzc{sL}}_n, \E[p^n] ) = 0$ then $H^1_{{\rm loc}} (  \langle \rho_n, {\mathpzc{sL}}_n \rangle, \E[p^n] ) = 0$.

\bigskip

Part 2.
We only present the proof for ${\mathpzc{U}}_n$. The proof for ${\mathpzc{L}}_n$ is similar.
Since ${\mathpzc{U}}_n^\prime$ is normal, we have the inflaction-restriction sequence:
\[
0 \rightarrow H^1 ( {\mathpzc{U}}_n / {\mathpzc{U}}_n^\prime, \E[p^n]^{{\mathpzc{U}}_n^\prime} ) \rightarrow H^1 ( {\mathpzc{U}}_n, \E[p^n] ) \rightarrow H^1 ( {\mathpzc{U}}_n^\prime, \E[p^n] ).
\]
The matrix $\rho_n \in {\mathpzc{U}}_n$  is diagonal and  modulo $p$ reduces to $\rho$. By hypothesis, the eigenvalues $\lambda_1, \lambda_2$ of $\rho$ are both different from $1$. Thus  $\rho_n - I$ is an isomorphism of $\E[p^n]$ to itself.
Let $[\rho_n]$ be the class of $\rho_n$ in ${\mathpzc{U}}_n / {\mathpzc{U}}_n^\prime$. Then $[\rho_n] - I$ is an isomorphism of $\E[p^n]^{{\mathpzc{U}}_n^\prime}$ into itself.
Since ${\mathpzc{U}}_n / {\mathpzc{U}}_n^\prime$ is abelian, then
by Lemma~\ref{lem:lem61} \begin{equation}\label{eqn:rel62}
H^1 ( {\mathpzc{U}}_n / {\mathpzc{U}}_n^\prime, \E[p^n]^{{\mathpzc{U}}_n^\prime} ) = 0.\end{equation}

On the other hand, by Lemma~\ref{nuovo2}, ${\mathpzc{U}}_n^\prime$ is cyclic. Moreover $H^1_{{\rm loc}} ( {\mathpzc{U}}_n, \E[p^n] )$ is the intersection of the kernels of the restriction maps $H^1 ( {\mathpzc{U}}_n, \E[p^n] ) \rightarrow H^1 ( C, \E[p^n] )$, as $C$ varies over all cyclic subgroups of ${\mathpzc{U}}_n$ (see Definition \ref{defloc}).
If $H^1_{{\rm loc}} ( {\mathpzc{U}}_n, \E[p^n] ) \neq 0$, then
$
H^1 ( {\mathpzc{U}}_n / {\mathpzc{U}}_n^\prime, \E[p^n]^{{\mathpzc{U}}_n^\prime} ) \neq 0
$,
which contradicts~(\ref{eqn:rel62}).  So  $H^1_{{\rm loc}} ( {\mathpzc{U}}_n, \E[p^n] ) = 0$.

\bigskip

Part 3. As ${\mathpzc{U}}_n^\prime$ is normal in ${\mathpzc{U}}_n$, we consider the inflaction-restriction sequence
\[
0 \rightarrow H^1 ( {\mathpzc{U}}_n / {\mathpzc{U}}_n^\prime, \E[p^n]^{{\mathpzc{U}}_n^\prime} ) \rightarrow H^1 ( {\mathpzc{U}}_n, \E[p^n] ) \rightarrow H^1 ( {\mathpzc{U}}_n^\prime, \E[p^n] ).
\]

 Recall that $Q_1$ and $Q_2$ is the  basis of $\E[p^n]$ such that $\rho_n=${\scriptsize $\left(
                                                            \begin{array}{cc}
                                                              \lambda_{1,n}&0 \\
                                                              0&\lambda_{2,n} \\
                                                            \end{array}
                                                          \right)$}.
%$\rho_n={\lambda_{1, n}\,\,\,0\choose \,0\,\hspace{0.3cm} \lambda_{2, n}}$.
Then $\rho_n ( Q_1 ) = \lambda_{1, n} Q_1$ and  $\rho_n ( Q_2 ) = \lambda_{2, n} Q_2$.
We first prove that $\E[p^n]^{{\mathpzc{U}}_n^\prime} \subseteq \langle Q_1 \rangle$.
Let $a, b \in \Z / p^n \Z$ and let $a Q_1 + b Q_2 \in \E[p^n]^{{\mathpzc{U}}_n^\prime}$.
By Lemma~\ref{nuovo2},  $ {1\,\,\, 1 \choose 0\,\,\, 1} \in {\mathpzc{U}}_n^\prime$.
Then
\[
a Q_1 + b Q_2 = \sigma_n ( a Q_1 + b Q_2 ) = ( a + b ) Q_1 + b Q_2.
\]
Thus $b Q_1=0$. Whence $b = 0$, because
 $Q_1$ has exact order $p^n$.

Let $[\rho_n]$ be the class of $\rho_n$ in ${\mathpzc{U}}_n / {\mathpzc{U}}_n^\prime$.
Since $\rho$ has order at least $3$ and $\lambda_2 = 1$, then $\lambda_1\not=1$. Thus $(\rho_n - I )Q_1  = ( \lambda_{1, n} - 1 ) Q_1$  with $\lambda_{1, n} \not \equiv 1 \mod ( p )$. Consequently the restriction of $\rho_n - I$ to $\langle Q_1 \rangle$ is an isomorphism.
As $\E[p^n]^{{\mathpzc{U}}_n^\prime} \subseteq \langle Q_1 \rangle$,
also $[\rho_n] - I$ is an isomorphism of $\E[p^n]^{{\mathpzc{U}}_n^\prime}$ to itself.
Moreover ${\mathpzc{U}}_n/ {\mathpzc{U}}_n^\prime$ is abelian.
By Lemma~\ref{lem:lem61},
\begin{equation}\label{eqn:rel162}
H^1 ( {\mathpzc{U}}_n / {\mathpzc{U}}_n^\prime, \E[p^n]^{{\mathpzc{U}}_n^\prime} ) = 0.
\end{equation}

On the other hand,  ${\mathpzc{U}}_n^\prime$ is cyclic and $H^1_{{\rm loc}} ( {\mathpzc{U}}_n, \E[p^n] )$ is the intersection of the kernels of the restriction maps $H^1 ( {\mathpzc{U}}_n, \E[p^n] ) \rightarrow H^1 ( C, \E[p^n] )$, as $C$ varies over all cyclic subgroups of ${\mathpzc{U}}_n$ (see Definition \ref{defloc}).
If  $H^1_{{\rm loc}} ( {\mathpzc{U}}_n, \E[p^n] ) \neq 0$, then
$H^1 ( {\mathpzc{U}}_n / {\mathpzc{U}}_n^\prime, \E[p^n]^{{\mathpzc{U}}_n^\prime} ) \neq 0.$ This contradicts (\ref{eqn:rel162}). So $H^1_{{\rm loc}} ( {\mathpzc{U}}_n, \E[p^n] ) = 0$.

\end{proof}

We are now ready to conclude the proof of Proposition~\ref{pro:pro63}. The core idea is to glue the cohomology of the subgroups of $G_n$ via some special elements in their  intersections. In the previous part we already proved that the local cohomology of such subgroups is trivial and so also the local cohomology of $G_n$ is trivial.
For the convenience of the reader, we recall the statement:

\begin{proposi}
Suppose that $H^1( G_n, \E[p^n] ) \neq 0$ and that $\rho={\lambda_1 \,\, \,\,\,0 \choose  0 \,\, \,\,\, \lambda_2}$ has order at least $3$.
\begin{itemize}
\item[1.] If $\lambda_1\not=1$ and $\lambda_2\not=1$, then $H^1_{{\rm loc}} ( G_n, \E[p^n] ) = 0$;
\item[2.]  If $G_1$ is not cyclic and $\lambda_2= 1$, then $H^1_{{\rm loc}} ( G_n, \E[p^n] ) = 0$.
\end{itemize}
\end{proposi}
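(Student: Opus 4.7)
The plan is to deduce the triviality of $H^1_{\mathrm{loc}}(G_n, \E[p^n])$ from the pieces already established in Proposition~\ref{teo:teo62}, by gluing them along the diagonal element $\rho_n$. First I would note that Proposition~\ref{nuovo1} provides a generating set of $G_n$ consisting of diagonal, strictly upper triangular, and strictly lower triangular matrices; consequently $G_n = \langle \mathpzc{U}_n, \mathpzc{sL}_n\rangle$, where $\mathpzc{U}_n := G_n \cap \mathpzc{U}$, and $\rho_n$ lies in both $\mathpzc{U}_n$ and $\langle \rho_n, \mathpzc{sL}_n\rangle$.

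Given $[c] = [\{Z_\sigma\}] \in H^1_{\mathrm{loc}}(G_n, \E[p^n])$, I would use the characterisation of local cohomology as the intersection of kernels of restrictions to cyclic subgroups to observe that the restriction of $[c]$ to any subgroup still lies in $H^1_{\mathrm{loc}}$. Applying Proposition~\ref{teo:teo62}.1 to $\langle \rho_n, \mathpzc{sL}_n\rangle$ produces $B \in \E[p^n]$ with $Z_\tau = (\tau - I)B$ for $\tau$ in that subgroup, while Proposition~\ref{teo:teo62}.2 (in case~1) or \ref{teo:teo62}.3 (in case~2) produces $A \in \E[p^n]$ with $Z_\sigma = (\sigma - I)A$ for $\sigma \in \mathpzc{U}_n$. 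Specialising both identities at $\rho_n$ gives $(\rho_n - I)(A - B) = 0$.

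In case~1, both $\lambda_{1,n} - 1$ and $\lambda_{2,n} - 1$ are units in $\Z/p^n\Z$, so $\rho_n - I$ is an automorphism of $\E[p^n]$ and $A = B$; this common element trivialises $c$ on both generating subgroups, and the cocycle identity propagates the relation $Z_\sigma = (\sigma - I)A$ along products to yield $[c] = 0$. In case~2, $\lambda_2 = 1$ forces $\lambda_{2,n} = 1$ (since $\rho_n$ has the same order as $\rho$, which is coprime to $p$), so $\ker(\rho_n - I) = \langle Q_2\rangle$ and $A - B \in \langle Q_2\rangle$. The crucial point is that every element of $\mathpzc{sL}_n$ fixes $Q_2$ (its second column is $(0,1)^T$), hence acts trivially on $\langle Q_2\rangle$; thus $(\sigma - I)(A - B) = 0$ and $(\sigma - I)A = Z_\sigma$ for all $\sigma \in \mathpzc{sL}_n$. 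So $A$ trivialises $c$ on $\mathpzc{U}_n$ and on $\mathpzc{sL}_n$, and the same propagation argument gives $[c] = 0$.

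The main obstacle is precisely case~2: $\rho_n - I$ is not invertible, so $A$ and $B$ need not coincide, and the argument hinges on matching $\ker(\rho_n - I)$ with the fixed set of $\mathpzc{sL}_n$ so that the discrepancy $A - B$ becomes invisible on the second generating subgroup. This is exactly the reason Proposition~\ref{teo:teo62}.3 (which requires $G_1$ non-cyclic, so that $\mathpzc{sL}_n$ is available as a genuine complement to $\mathpzc{U}_n$) pairs correctly with Proposition~\ref{teo:teo62}.1 to complete the gluing.
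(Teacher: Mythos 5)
Your proof is correct and follows essentially the same gluing strategy as the paper: restrict $[Z]$ to two subgroups of $G_n$ whose local cohomologies are killed by Proposition~\ref{teo:teo62}, extract the two potential coboundary witnesses $A$ and $B$, use the common element $\rho_n$ to pin down $A-B\in\ker(\rho_n-I)$, and in case~2 observe that $\mathpzc{sL}_n$ annihilates that kernel. The only (harmless) variant is in case~1, where you restrict to $\langle\rho_n,\mathpzc{sL}_n\rangle$ and $\mathpzc{U}_n$ (invoking parts~1 and~2 of Proposition~\ref{teo:teo62}) while the paper restricts to $\mathpzc{L}_n$ and $\mathpzc{U}_n$ (invoking part~2 twice); your choice makes the two cases more uniform but leaves the argument unchanged.
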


\begin{proof}

Part 1.
 Consider the restrictions
\[
r_L : H^1 ( G_n, \E[p^n] ) {\rightarrow} H^1 ( {\mathpzc{L}}_n, \E[p^n] ), \]
\[ r_U : H^1 ( G_n, \E[p^n] ) {\rightarrow} H^1 ( {\mathpzc{U}}_n, \E[p^n] ).
\]
Let $Z$ be a cocycle from $G_n$ to $\E[p^n]$, such that its class $[Z] \in H^1_{{\rm loc}} ( G_n, \E[p^n] )$. If a cocycle satisfies the local conditions relative to $G_n$ (see Definition \ref{defloc}), then it satisfies them relative to any subgroup of $G_n$. Thus  $r_L([Z]) \in H^1_{{\rm loc}} ({\mathpzc{L}}_n, \E[p^n] ) $ and $r_U([Z]) \in H^1_{{\rm loc}} ( {\mathpzc{U}}_n, \E[p^n] ).$  By Proposition~\ref{teo:teo62} part 2. both local cohomologies are trivial.
Therefore $[Z] \in \ker ( r_L ) \cap \ker ( r_U )$.
In other words the restriction of $Z$ to $ {\mathpzc{L}}_n$ and its restriction to ${\mathpzc{U}}_n$ are coboundaries.
Hence, there exist $P, Q \in \E[p^n]$, such that
\begin{equation}\label{eqn:rel64}
\begin{split}
Z_\gamma &= \gamma ( P ) - P  \quad{\rm for \quad every}\quad \gamma \in {\mathpzc{L}}_n; \\  \ Z_\delta &= \delta ( Q ) - Q \quad{\rm for \quad every} \quad\delta \in {\mathpzc{U}}_n.
\end {split}\end{equation}
Observe that  $\rho_n \in {\mathpzc{L}}_n \cap {\mathpzc{U}}_n$. Then
\[
Z_{\rho_n} = \rho_n ( P ) - P = \rho_n ( Q ) - Q.
\]
Thus $P-Q \in \ker ( \rho_n - I )$.
Modulo $p$, the eigenvalues of $\rho_n$ coincide with $\lambda_1$ and $\lambda_2$. Therefore $\rho_n - I$ is an isomorphism.
In particular $\ker ( \rho_n - I ) = 0$, which implies $P = Q$.
Then $[Z]$ is $0$ over the group generated by ${\mathpzc{L}}_n$ and ${\mathpzc{U}}_n$.
As $G_n$ is generated by $\rho_n$ and $H_n$,  Proposition~\ref{nuovo1} implies that $G_n$ is generated by ${\mathpzc{L}}_n$ and ${\mathpzc{U}}_n$.
Thus $[Z]$ is $0$ over $G_n$.

Part 2.
Consider the restrictions
\begin{equation*}
\begin{split}
{r_{SL}} &: H^1 ( G_n, \E[p^n] ) {\rightarrow} H^1 ( \langle \rho_n, {\mathpzc{sL}}_n\rangle, \E[p^n] ),\\ {r_U} &: H^1 ( G_n, \E[p^n] ) {\rightarrow} H^1 ( {\mathpzc{U}}_n, \E[p^n] ).
\end{split}
\end{equation*}
Let $Z$ be a cocycle from $G_n$ to $\E[p^n]$, such that its class $[Z] \in H^1_{{\rm loc}} ( G_n, \E[p^n] )$.
Then $r_{SL}( [Z] ) \in H^1_{{\rm loc}} ( \langle \rho_n, {\mathpzc{sL}}_n\rangle, \E[p^n] ) $ and $r_U([Z]) \in H^1_{{\rm loc}} ( {\mathpzc{U}}_n, \E[p^n] )$, which are trivial  by Proposition~\ref{teo:teo62}.
It follows $[Z] \in \ker ( r_{SL} ) \cap \ker ( r_U )$.
Hence, there exist $P, Q \in \E[p^n]$, such that
\begin{equation}\label{eqn:relnuovissima}
\begin{split}
Z_\gamma &= \gamma ( P ) - P  \quad{\rm for \quad every}\quad \gamma \in \langle \rho_n, {\mathpzc{sL}}_n\rangle; \\  \ Z_\delta &= \delta ( Q ) - Q \quad{\rm for \quad every} \quad\delta \in {\mathpzc{U}}_n.
\end {split}\end{equation}
Recall that  $\rho_n \in \langle \rho_n, {\mathpzc{sL}}_n\rangle \cap {\mathpzc{U}}_n$. Then
\begin{equation}\label{eqn:rel65}
Z_{\rho_n}  = \rho_n ( P ) - P = \rho_n ( Q ) - Q
\end{equation}
and  $P - Q \in \ker ( \rho_n - I )$.
Observe that $\rho_n$ has the same order of $\rho$ and $\lambda_2 = 1$. Since the order of $\rho$ divides $p-1$, and $\lambda_{2,n}\equiv \lambda_2$ $(\textrm{mod } p)$,
then $\lambda_{2,n} = 1$ too. We have $
\rho_n - I =
\left(
\begin{array}{cc}
\lambda_{1, n} - 1 & 0 \\
0 & 0 \\
\end{array}
\right),
$
with $\lambda_{1, n} \not \equiv 1 \mod ( p )$, because $\rho$ has order at least $3$ and $\lambda_2 = 1$.
We deduce   $P - Q = ( 0, b )$ for a certain $b \in \Z / p^n \Z$.
In addition, $\langle \rho_n, {\mathpzc{sL}}_n\rangle$ is generated by $\rho_n$ and $\tau = {1\,\,\, 0 \choose p^j\,\,\, 1} $.
By (\ref{eqn:relnuovissima}), we know $Z_{\tau} = \tau ( P ) - P$.
But
$\tau- I =
\left(
\begin{array}{cc}
0 & 0 \\
p^j & 0 \\
\end{array}
\right).
$
Then
$
\tau ( P ) - P = \tau ( P - ( 0, b ) ) - ( P - ( 0, b ) ) = \tau ( Q ) - Q. $
Thus \begin{equation}\label{eqn:rel66}Z_{\tau}  = \tau ( Q ) - Q.\end{equation}

The  group $G_n$ is generated by $\rho_n$ and $H_n$. In view of Proposition \ref{nuovo1}, the group $G_n$ is generated by ${\mathpzc{sL}}_n=\langle\tau\rangle $ and ${\mathpzc{U}}_n$. By (\ref{eqn:rel64}), (\ref{eqn:rel65}) and (\ref{eqn:rel66})
 we see that $Z$ is a coboundary and so  $[Z]=0$.
\end{proof}

\newpage

\thispagestyle{empty}

Laura Paladino\par\smallskip
Dipartimento di Matematica \par
Universit\`{a} della Calabria\par\smallskip
via Ponte Pietro Bucci, cubo 31b\par
IT-87036 Arcavacata di Rende (CS)\par
e-mail address: paladino@mat.unical.it

\vskip 1cm

Gabriele Ranieri\par\smallskip
Collegio Puteano \par
Scuola Normale Superiore \par
Piazza dei Cavalieri 3,\par\smallskip
IT-56100 Pisa,\par
Italy\par
e-mail address: gabriele.ranieri@sns.it

\vskip 1cm

Evelina Viada\par\smallskip
Departement Mathematik\par
Universit\"{a}t Basel\par\smallskip
Rheinsprung, 21\par
CH-4051 Basel\par
e-mail address: evelina.viada@unibas.ch


\begin{thebibliography}{Pal}

\bibitem[AT]{AT}  \textsc{Artin E., Tate J.}, \textit{Class field theory}, Benjamin, Reading, MA, 1967.

\bibitem[DKSS]{D-K-S-S} \textsc{Derickx M., Kamienny S., Stein W., Stoll M.}, {\it Torsion points on
elliptic curves over number fields of  degree $5$}, work in progress.

\bibitem[DZ]{DZ}  \textsc{Dvornicich R., Zannier U.}, \emph{Local-global divisibility of rational points in some commutative algebraic groups}, Bull. Soc. Math. France, \textbf{129} (2001), 317-338.

\bibitem[DZ2]{DZ2}  \textsc{Dvornicich R., Zannier U.}, \textit{An analogue for elliptic curves of the Grunwald-Wang example}, C. R. Acad. Sci. Paris, Ser. I \textbf{338} (2004), 47-50.

\bibitem[DZ3]{DZ3}  \textsc{Dvornicich R.}, \textsc{Zannier U.}, \textit{On local-global principle for the divisibility of a rational point by a positive integer}, Bull. Lon. Math. Soc., no. \textbf{39} (2007), 27-34.

\bibitem[Kam]{Ke-Mo} \textsc{Kamienny, S.}, {\it Torsion points of elliptic curves and $q$-coefficients of modular forms}, Invent. Math., no. {\bf 109}, (1992), 221-229.

\bibitem[KSS]{K-S-S} \textsc{Kamienny S., Stein W., Stoll M.}, {\it Torsion points on
elliptic curves over quartic number fields}, preprint 2011.

\bibitem[KM]{Ke-Mo2} \textsc{Kenku M. A.}, \textsc{Momose F.}, {\it Torsion points of elliptic curves defined over quadratic fields}, Nagoya Math. J., no. {\bf 109}, (1998), 125-149.


\bibitem[Ill]{Ill} \textsc{Illengo M.}, {\it Cohomology of integer matrices and local-global divisibility on the torus}, Le Journal de Th\'{e}orie des Nombres de Bordeaux, no. {\bf 20} (2008), 327-334.

%\bibitem[Jac]{Jac} \textsc{Jacobson}, \textit{Basic Algebra I}

\bibitem[Lan]{Lan} \textsc{Lang S.}, \textit{Elliptic curves: diophantine analysis}, Grundlehren der Mathemathischen Wissenschaften 231, Springer, 1978.

\bibitem[Maz]{Maz2} \textsc{Mazur B.}, {\it Modular curves and the Eisenstein Ideal}, Publ. Math.,  Inst. Hauter Etud. Sci , no. {\bf 47}, (1977), 33-186.


\bibitem[Maz2]{Maz} \textsc{Mazur B.}, {\it Rational isogenies of prime degree (with an appendix of D. Goldfeld)}, Invent. Math., no. {\bf 44}, (1978), 129-162.



%\bibitem[Mer]{Mer} \textsc{Merel L.}, \textit{'Bornes sur la torsion des courbes elliptiques sur les corps de nombres', Elliptic curves, modular forms, and Fermat's last theorem}, Hong Kong, 1993, Ser. Number Theory I (Internat. Press, cambridge, MA, 1995), 110-130.

\bibitem[Mer]{Mer} \textsc{Merel L.}, {\it Bornes pour la torsion des courbes elliptiques sur les corps de nombres}, Invent. Math.  {\bf 124}, (1996), 437-449.

%\bibitem[Mil]{Mil} \textsc{Milne J.}, {\it Class field theory},\\
%http://www.jmilne.org/math/CourseNotes/CFT.pdf

\bibitem[Oes]{Oes} \textsc{Oesterl\'e J.}, {\it Torsion des courbes elliptiques sur les corps de nombres}, preprint.

\bibitem[Pal]{Pal}  \textsc{Paladino L.},  \textit{Local-global divisibility by $4$ in elliptic curves defined over $\Q$}, Annali di Matematica Pura e Applicata, no. {\bf 189.1}, (2010), 17-23.

\bibitem[Pal2]{Pal2}  \textsc{Paladino L.},  \textit{Elliptic curves with $\Q({\mathcal{E}}[3])=\Q(\zeta_3)$ and counterexamples to local-global divisibility by 9}, Le Journal de Th\'{e}orie des Nombres de Bordeaux, Vol. \textbf{22}, n. 1 (2010), 138-160.

\bibitem[Pal3]{Pal3}  \textsc{Paladino L.},  \textit{On counterexamples to local-global divisibility in commutative algebraic groups}, Acta Arithmetica, \textbf{148} no. 1, (2011), 21-29.

\bibitem[PRV]{P-R-V}  \textsc{Paladino L., Ranieri G., Viada E.},  \textit{Local-Global Divisibility by $p^2$ in elliptic curves}, preprint.
%\bibitem[San]{San}  \textsc{Sansuc J.-J.}, \textit{Groupe de Brauer et arithm\'{e}thique des groupes lin\'{e}aires sur un corps de nombres}, J. reine angew. Math., t. {\bf 327} (1981), 12-80.

\bibitem[Par]{Par} \textsc{Parent P.}, {\it Torsion des courbes elliptiques sur les corps cubiques}, Ann. Inst. Fourier no. {\bf 50}  (3), (2000) 723-749.

\bibitem[Par2]{Par2} \textsc{Parent P.}, {\it No 17-torsion on elliptic curves over cubic number
fields}, Journal des Th\'{e}orie des nombres de Bordeaux no. {\bf 15}, (2003) 831-838.


\bibitem[Ser]{Ser} \textsc{Serre J-P.}, {\it Propriet\'es Galoisiennes des points d'ordre fini des courbes elliptiques}, Invent. Math. no. {\bf 15}, (1972) 259-331.

%\bibitem[Ser2]{Ser2}  \textsc{Serre J.-P.}, \textit{Topics in Galois Theory}, Jones and Bartlett, Boston 1992.

%\bibitem[Shi]{Shi} \textsc{Shimura G.}, {\it Introduction to the Arithmetic Theory of of Automorphic Forms}, Princeton Univ. Press, Princeton, NJ, 1971

\bibitem[Sil]{Sil} \textsc{Silverman J. H.}, \textit{The arithmetic of elliptic curves, $2$nd edition}, Springer, Heidelberg, 2009.

\bibitem[Tro]{Tro}  \textsc{Trost E.}, \textit{Zur theorie des Potenzreste}, Nieuw Archief voor Wiskunde, no. {\bf 18} (2) (1948), 58-61.

%\bibitem[Won]{Won} \textsc{ Wong S.}, \textit{Power residues on abelian variety}, Manuscripta Math., no. {\bf 102} (2000), 129-137.
\end{thebibliography}
\end{document}